\numberwithin{equation}{section}
\newtheorem{lemma}[equation]{Lemma}
\newtheorem{thm}[equation]{Theorem}
\newtheorem{cor}[equation]{Corollary}
\newtheorem{prop}[equation]{Proposition}
\newtheorem{question}[equation]{Question}
\newtheorem{defi}[equation]{Definition}
\theoremstyle{remark}
\newtheorem*{remark}{Remark}
\newtheorem*{notation}{Notation}
\newtheorem*{acknowledgments}{Acknowledgments}
\newcommand{\col}{\,{:}\,}
\newcommand{\lra}{\longrightarrow}
\DeclareMathOperator{\End}{{End}}
\DeclareMathOperator{\CC}{\mathcal{C}}
\newcommand{\N}{{\mathbb N}}
\newcommand{\Z}{{\mathbb Z}}
\newcommand{\Q}{{\mathbb Q}}
\newcommand{\bC}{{\mathbb C}}
\newcommand{\bG}{{\mathbb G}}
\newcommand{\OO}{{\mathcal O}}
\newcommand{\cU}{\mathcal{U}}
\newcommand{\cG}{\mathcal{G}}
\newcommand{\cV}{\mathcal{V}}
\newcommand{\cS}{\mathcal{S}}
\newcommand{\tensor}{\otimes}
\newcommand{\ok}{\mathfrak{O}_K}
\newcommand{\cR}{\mathcal{R}}
\begin{document}



\title[The Mordell--Lang question for endomorphisms]
{The Mordell--Lang question for endomorphisms of semiabelian varieties}

\author{D.~Ghioca}
\address{
Dragos Ghioca\\
Department of Mathematics\\
University of British Columbia\\
Vancouver, BC V6T 1Z2\\
Canada
}
\email{dghioca@math.ubc.ca}

\author{T.~J.~Tucker}
\address{
Thomas Tucker\\
Department of Mathematics\\
University of Rochester\\
Rochester, NY 14627\\
USA
}
\email{ttucker@math.rochester.edu}

\author{M.~E.~Zieve}
\address{
Michael E. Zieve\\
Department of Mathematics\\
University of Michigan\\
530 Church Street\\
Ann Arbor, MI 48109\\
USA
}
\email{zieve@umich.edu}

\keywords {$p$-adic exponential, Mordell-Lang conjecture, semiabelian varieties}
\subjclass[2010]{Primary 14L10; Secondary 37P55,  11G20}
\thanks{The first author was partially supported by NSERC. The second author was partially 
supported by NSF Grants 0801072 and 0854839.}


\begin{abstract}
The Mordell--Lang conjecture describes the intersection of a finitely generated
subgroup with a closed subvariety of a semiabelian variety.  Equivalently, this
conjecture describes the intersection of closed subvarieties with the set of
images of the origin under a finitely generated semigroup of translations.
We study the analogous question in which the translations are replaced by
algebraic group endomorphisms (and the origin is replaced by another point).
We show that the conclusion of the Mordell--Lang conjecture
remains true in this setting if either (1) the semiabelian variety is simple, (2) the semiabelian variety is $A^2$, where $A$ is a one-dimensional semiabelian variety,
(3) the subvariety is a connected one-dimensional
algebraic subgroup, 
or (4) each endomorphism has diagonalizable Jacobian at
the origin.  We also give examples showing that the conclusion fails if we
make slight modifications to any of these hypotheses.

La conjecture du Mordell-Lang d\'{e}crit l'intersection d'une sous-groupe de type fini avec une vari\'{e}t\'{e} ferm\'{e}e d'une vari\'{e}t\'{e} semi-ab\'{e}lienne. Equivalemment, cette conjecture d\'{e}crit l'intersection de sous-vari\'{e}t\'{e}s ferm\'{e}es avec la collection d'images de l'origine sous un semigroupe de translat\'es de type fini. Nous \'{e}tudions la question analogue dans laquelle les translat\'es sont
remplac\'{e}es par les endomorphismes de groupe alg\'{e}briques (et l'origine est remplac\'{e}e par un autre point). Nous montrons qui la conclusion de la  conjecture du Mordell-Lang reste vraie dans ce param\`{e}tre si ou (1) la vari\'{e}t\'{e} semi-ab\'{e}lienne est simple, (2) la vari\'{e}t\'{e} semi-ab\'{e}lienne est $A^2$, o\`{u} $A$ est une vari\'{e}t\'{e} semi-ab\'{e}liennne de dimension $1$, (3) la sous-vari\'{e}t\'{e} est une sous-vari\'{e}t\'{e} semi-ab\'{e}liennne de dimension $1$, ou (4) la matrice jacobienne \`{a} l'origine du chaque endomorphisme est diagonalisable. Nous donnons aussi des exemples qui montre que la conclusion \'{e}choue si nous faisons l'affront \`{a} n'importe lequel de ces hypoth\'{e}ses. 
\end{abstract}

\date{\today}

\maketitle


\section{Introduction}

Lang's generalization of the Mordell conjecture describes the intersection
of certain subgroups and subvarieties of algebraic groups.  Specifically,
this conjecture addresses \emph{semiabelian varieties}, which are the
connected algebraic groups $G$ admitting an exact sequence
\[ 1 \to \bG_m^k \to G \to A \to 1 \]
with $A$ an abelian variety and $\bG_m$ the multiplicative (algebraic) group.
The Mordell--Lang conjecture, proved by Faltings \cite{Faltings} for
abelian varieties and by Vojta \cite{V1} in general, is as follows:

\begin{thm}
\label{T:F}
Let $G$ be a semiabelian variety defined over\/ $\bC$.
Let $V$ be a closed subvariety of $G$, and let\/
$\Gamma$ be a finitely generated subgroup of
$G(\bC)$.  Then $V(\bC)\cap\Gamma$ is the union of finitely many
cosets of subgroups of\/ $\Gamma$.
\end{thm}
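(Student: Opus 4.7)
The plan is to induct on $\dim V$ and thereby reduce to proving finiteness of $V(\bC)\cap\Gamma$ under the additional assumption that $V$ contains no translate of a positive-dimensional algebraic subgroup of $G$. If $H\subset G$ denotes the algebraic stabilizer of $V$, then $V$ is a union of $H$-cosets, so by replacing $G$ with $G/H$, $V$ with its image $V/H$, and $\Gamma$ with its image in $G/H$, one may assume $H=\{0\}$. Infinite cosets of subgroups of $\Gamma$ lying entirely inside $V$ are then ruled out by the trivial-stabilizer assumption, and the statement collapses to the assertion that $V(\bC)\cap\Gamma$ is finite whenever $V$ is non-degenerate in this sense.

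Next, I would reduce to the case where $G$ is an abelian variety. Using the exact sequence $1\to\bG_m^k\to G\to A\to 1$, one pushes $\Gamma$ and $V$ forward to $A$ and would like to invoke Faltings's theorem there; however, the fibers of $V\to A$ may meet $\Gamma$ in ways not controlled purely by the image, so the toric direction must also be analyzed. This step can be carried out either by choosing a toroidal compactification of $G$ and working with height estimates on the boundary divisors, or by a $p$-adic analytic lifting in which $\Gamma$ is placed inside a $p$-adic Lie subgroup of $G$ whose image in $A$ is governed by the abelian case (this latter route is in the spirit of Vojta's extension).

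For the abelian-variety case, after spreading $\Gamma$ out over a finitely generated field of definition, saturating it so that $\Gamma\otimes\Q$ is a finite-dimensional $\Q$-vector space, and equipping $\Gamma\otimes\mathbb{R}$ with the norm coming from the N\'eron--Tate height, I would apply the Vojta--Faltings machinery on the self-product $A^n$. The core input is Vojta's inequality: a pigeonhole argument on a hypothetical infinite subset of $V(\bC)\cap\Gamma$ produces tuples $(P_1,\dots,P_n)$ whose height-vectors are rapidly increasing in norm and nearly parallel in $\Gamma\otimes\mathbb{R}$. An arithmetic intersection-theoretic estimate on $A^n$, combined with Faltings's Product Theorem and a zero-estimate, then shows that such a configuration cannot lie on $V^n$ unless $V$ has nontrivial stabilizer, contradicting the reduction. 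Mumford's gap principle ensures that any infinite sequence of $\Gamma$-points on $V$ does yield such a clustered subsequence after normalization.

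The main obstacle is Vojta's inequality, whose proof draws heavily on arithmetic intersection theory and Faltings's Product Theorem on products of varieties; a secondary obstacle is the descent from the semiabelian to the abelian case, which is precisely where Vojta's work extends Faltings's original argument. Once these two technical cores are available, the induction on $\dim V$ together with the stabilizer reduction assembles the full coset description of $V(\bC)\cap\Gamma$.
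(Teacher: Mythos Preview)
The paper does not prove this theorem at all: Theorem~\ref{T:F} is stated as background, attributed to Faltings \cite{Faltings} for abelian varieties and to Vojta \cite{V1} in the semiabelian case, and is then used as a black box throughout (e.g.\ in Corollary~\ref{the case of simple semiabelian varieties}, in Section~\ref{p-adic section}, and in the proof of Theorem~\ref{main}(a)). There is no argument in the paper to compare your proposal against.

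Your sketch is a reasonable outline of the Faltings--Vojta strategy: the stabilizer reduction to a variety with trivial stabilizer, the passage from semiabelian to abelian, and the Vojta inequality plus Product Theorem in the abelian case are indeed the main structural pieces. But be aware that you have described a roadmap rather than a proof; each of the two ``obstacles'' you flag (Vojta's inequality and the semiabelian-to-abelian descent) is itself a substantial paper, and your paragraph on the toric reduction is not an argument but a pointer to where one would be needed. For the purposes of this paper, the correct move is simply to cite \cite{Faltings} and \cite{V1} (and optionally \cite{McQuillan} for the finite-rank extension), exactly as the authors do.
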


McQuillan \cite{McQuillan} extended Vojta's result to the case of finite rank subgroups $\Gamma\subset G(\bC)$, i.e. when $\dim_{\Q}\Gamma\tensor_{\Z}\Q$ is finite.

We can view Theorem~\ref{T:F} as a dynamical assertion by writing
$\Gamma$ as the set of images of the origin under translations by elements
of $\Gamma$.  We will study the analogous problem
in which the translation maps are replaced by more general endomorphisms
of $G$, and the origin is replaced by an arbitrary point of $G(\bC)$.
In this setting, the analogue of $\Gamma$ is not generally a group,
so we must reformulate the conclusion of Theorem~\ref{T:F}, as follows:

\begin{question}
\label{semigroup semiabelian}
Let $G$ be a semiabelian variety defined over\/ $\bC$, let\/
$\Phi_1,\dots,\Phi_r$
be commuting endomorphisms of $G$, let $V$ be a closed subvariety of $G$, and
let $\alpha\in G(\bC)$.  Let $E$ be the set of tuples
$(n_1,\dots,n_r)\in\N^r$ for which\/
$\Phi_1^{n_1}\cdots\Phi_r^{n_r}(\alpha)\in V(\bC)$.  Is $E$ the union of
finitely many sets of the form $u+(\N^r\cap H)$ with $u\in\N^r$ and $H$ a
subgroup of\/~$\Z^r$?
\end{question}

Theorem~\ref{T:F} implies that the answer is `yes' if each $\Phi_i$ is a
translation (even if $\alpha\ne 0$); conversely, it is not difficult to
deduce Theorem~\ref{T:F} as a consequence of
Question~\ref{semigroup semiabelian} in case the $\Phi_i$ are translations.
The crucial fact here is that the group of points on a semiabelian variety
is commutative \cite[Lemma~4]{iitaka2}.  On a related note, one could consider
Question~\ref{semigroup semiabelian} for noncommuting endomorphisms, but
we know of almost no results in this setting.

Each endomorphism of a semiabelian variety is the composition of a translation
and an algebraic group endomorphism \cite[Thm.~2]{iitaka}.
Question~\ref{semigroup semiabelian} generally has
a negative answer if the $\Phi_i$'s include both translations and
algebraic group endomorphisms.  For instance, let $P_0$ be a nontorsion point
on a semiabelian variety $G_0$, and let $V$ be the diagonal in $G:=G_0^2$; for
\[
\Phi_1\colon (x,y)\mapsto (x+P_0,y) \quad\text{ and \quad}
\Phi_2\colon (x,y)\mapsto (x,2y),
\]
if $\alpha=(0,P_0)$ then
$E=\{(2^k,k): k\in\N_0\}$ does not have the desired form.
However, we will show that Question~\ref{semigroup semiabelian} often has an
affirmative answer when every $\Phi_i$ is an algebraic group endomorphism.

\begin{thm}
\label{main}
Let $G$ be a semiabelian variety defined over $\bC$, let $\alpha\in G(\bC)$,
let $V\subset G$ be a closed subvariety, and let $\Phi_1,\dots,\Phi_r$ be
commuting algebraic group endomorphisms of $G$.  Assume that either
\begin{enumerate}
\item[(a)] The Jacobian at $0$ of each endomorphism $\Phi_i$ is diagonalizable; or
\item[(b)] $V$ is a connected algebraic subgroup of $G$ of dimension one; or
\item[(c)] $G=A^j$ for $0\le j\le 2$, where $A$ is a one-dimensional semiabelian variety.
\end{enumerate}
Then the set $E$ of tuples $(n_1,\dots,n_r)\in\N^r$ for which
$\Phi_1^{n_1}\dots\Phi_r^{n_r}(\alpha)\in V(\bC)$ is the union of
finitely many sets of the form $u+(\N^r\cap H)$ with $u\in\N^r$ and $H$ a
subgroup of\/~$\Z^r$.
\end{thm}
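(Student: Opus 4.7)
I would first reduce, via classical Mordell--Lang, to a Diophantine problem in a finitely generated abelian group, and then analyse that problem $p$-adically. Each $\Phi_i$ is an algebraic group endomorphism, so the orbit is contained in $\Gamma:=\cR\cdot\alpha$, where $\cR:=\Z[\Phi_1,\dots,\Phi_r]$ is the commutative subring of $\End(G)$ generated by the $\Phi_i$. Because every endomorphism of a semiabelian variety sends the maximal affine subgroup into itself, $\End(G)$ embeds into $\End(T)\times\End(A)$ for the defining extension $1\to T\to G\to A\to 1$; both factors are finitely generated $\Z$-modules, so $\End(G)$ is as well, and hence $\Gamma$ is a finitely generated subgroup of $G(\bC)$. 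Applying Theorem~\ref{T:F} to $(V,\Gamma)$ expresses $V(\bC)\cap\Gamma$ as a finite union of cosets $\beta_j+H_j$ with $H_j\le\Gamma$, so it suffices to show, for each $j$, that the set $\{n\in\N^r:\Phi_1^{n_1}\cdots\Phi_r^{n_r}(\alpha)-\beta_j\in H_j\}$ is a finite union of sets of the form $u+(\N^r\cap H)$.

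Next I would spread the data over a finitely generated $\Z$-subalgebra of $\bC$, choose a prime $p$ of good reduction, and pass to $G(\bC_p)$. After partitioning $\N^r$ into finitely many translates of a suitable sublattice, one may assume that every $\Phi_1^{n_1}\cdots\Phi_r^{n_r}(\alpha)$ lies in a formal $p$-adic neighbourhood of the identity on which the $p$-adic logarithm converges and conjugates each $\Phi_i$ to its Jacobian $J_i:=d\Phi_i(0)$. The membership condition then becomes $J_1^{n_1}\cdots J_r^{n_r}\cdot v\equiv w\pmod{W}$ for fixed $v,w\in\mathrm{Lie}(G)\otimes\bC_p$ and a fixed $\bC_p$-linear subspace $W$. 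In case~(a) the commuting $J_i$ are simultaneously diagonalizable, so every coordinate of $J_1^{n_1}\cdots J_r^{n_r}\cdot v$ has the shape $c\prod_i\lambda_i^{n_i}$; the resulting equation is a linear relation among monomials in elements of a finitely generated multiplicative group of $\bC_p^{\times}$, and the $S$-unit theorems of Laurent, Evertse, Schlickewei and Schmidt guarantee that its solution set in $\N^r$ is a finite union of sets $u+(\N^r\cap H)$.

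Cases (b) and (c) force me to handle directions in which $J_i$ has a nontrivial nilpotent part. In case~(b), the hypothesis that $V$ is a connected one-dimensional algebraic subgroup of $G$ allows me to replace membership in $V$ by the vanishing in $G/V$ of $\overline{\Phi_1^{n_1}\cdots\Phi_r^{n_r}(\alpha)}$; since $G/V$ is semiabelian of strictly smaller dimension, an induction on $\dim G$, combined with the observation that only codimension-one relations remain to be solved, should allow the polynomial factors $n_i^k$ coming from nilpotent parts to be controlled one variable at a time. In case~(c), when $j\le 1$ the ring $\End(G)$ is either $\Z$ or embeds into an order in an imaginary quadratic field, so every Jacobian is automatically diagonalizable and (a) applies; for $G=A^2$ the endomorphism ring is $M_2(\End(A))$, and the Jordan form of any commuting family is simple enough to enumerate all possibilities and solve the resulting mixed exponential-polynomial equations explicitly. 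The main obstacle throughout is precisely this mixing: when $J_i$ has a nontrivial nilpotent part, the $p$-adic expansion of $\Phi_1^{n_1}\cdots\Phi_r^{n_r}(\alpha)$ contains polynomial factors $n_i^k$ alongside the exponentials $\lambda_i^{n_i}$, and showing that the zero set of such a mixed expression on $\N^r$ still decomposes as a finite union of sets $u+(\N^r\cap H)$ seems to require either outright diagonalizability (as in~(a)) or the additional geometric leverage supplied by (b) and (c).
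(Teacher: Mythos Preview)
Your overall architecture—apply Mordell--Lang to reduce to cosets of algebraic subgroups, then linearize $p$-adically so that the question becomes one about $J_1^{n_1}\cdots J_r^{n_r}u_0$ landing in an affine subspace of $\bC_p^g$—is exactly the route the paper takes, and your treatment of case~(a) via simultaneous diagonalization followed by Mordell--Lang for tori matches the paper's argument.

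However, your plan for case~(b) has a genuine gap. You propose to pass to $G/V$ and induct on $\dim G$, but the endomorphisms $\Phi_i$ have no reason to preserve the one-dimensional subgroup $V$, so they do not descend to endomorphisms of $G/V$. There is therefore no dynamical system on $G/V$ to which an inductive hypothesis could apply; the map $n\mapsto \pi(\Phi_1^{n_1}\cdots\Phi_r^{n_r}(\alpha))$ into $G/V$ is not an orbit under anything. The paper avoids this entirely. After the $p$-adic linearization, since $V$ is a connected algebraic \emph{subgroup} (not merely a coset) the target is a one-dimensional \emph{linear} subspace $L\subset\bC_p^g$ through the origin, and the question becomes: for which $(n_1,\dots,n_r)$ does $J_1^{n_1}\cdots J_r^{n_r}u_0\in L$? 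The paper then observes the elementary linear-algebra fact that for any group $T$ of invertible linear maps, the set $\{g\in T:g(v)\in L\}$ is either empty or a single coset of the stabilizer $T_L=\{g\in T:g(L)=L\}$. Pulling back along the surjection $\Z^r\to T$ given by $e_i\mapsto J_i$ yields a single coset of a subgroup of $\Z^r$, and intersecting with $\N^r$ gives the desired form. The crucial point is that one-dimensionality of $L$ makes the stabilizer argument work with no induction and no hypothesis on the nilpotent parts of the $J_i$.

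Your sketch of case~(c) is in the right spirit but skips the step that actually requires work. For $G=A^2$ with some $J_i$ non-diagonalizable, after conjugation one lands on an equation of the shape $\prod_i a_i^{n_i}\bigl(A+\sum_i c_in_i\bigr)=B$ with $a_i\in\ok$ and $c_i\in K$ for a quadratic imaginary field $K$. Saying the Jordan forms are ``simple enough to enumerate'' does not address why this mixed exponential--polynomial equation has solution set in $\CC$. The paper's argument uses specifically that the $a_i$ lie in the ring of integers of a quadratic imaginary field: if $\prod_i a_i^{n_i}$ took infinitely many values among the solutions, one could clear denominators so that $A+\sum_i c_in_i\in\ok\setminus\{0\}$, hence has absolute value at least $1$, forcing $\prod_i|a_i|^{n_i}\le|B|$; but there are only finitely many elements of $\ok$ of bounded absolute value, a contradiction. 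This archimedean finiteness is the heart of case~(c), and it is exactly where the hypothesis that $A$ is one-dimensional (so that $\End(A)$ sits in an imaginary quadratic field) is used.
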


It is immediate to see that if condition (a) holds in Theorem~\ref{main}, then the Jacobian at $0$ of \emph{each} endomorphism of the semigroup $S$ generated by the $\Phi_i$'s is diagonalizable. 

The first two authors showed in \cite{p-adic} that the conclusion
of Theorem~\ref{main} holds when $r=1$, even if one does not assume (a), (b), or (c).
However, in Section~\ref{counterexamples} we will give examples with $r=2$ and
$G=\bG_m^3$ where the conclusion of Theorem~\ref{main} does not hold.  Our examples
lie just outside the territory covered by Theorem~\ref{main}, as $V$ is a coset of
a one-dimensional algebraic subgroup of $G$ in the first example, and
$V$ is a two-dimensional algebraic subgroup of $G$ in the second example.

In our previous paper \cite{lines-2} we discussed a more general version of
Question~\ref{semigroup semiabelian} involving endomorphisms of an arbitrary
variety.  
\begin{question}
\label{general semigroup question}
Let $X$ be any quasiprojective variety defined over\/ $\bC$, let\/
$\Phi_1,\dots,\Phi_r$
be commuting endomorphisms of $X$, let $V$ be a closed subvariety of $G$, and
let $\alpha\in X(\bC)$.  Let $E$ be the set of tuples
$(n_1,\dots,n_r)\in\N^r$ for which\/
$\Phi_1^{n_1}\cdots\Phi_r^{n_r}(\alpha)\in V(\bC)$.  Is $E$ the union of
finitely many sets of the form $u+(\N^r\cap H)$ with $u\in\N^r$ and $H$ a
subgroup of\/~$\Z^r$?
\end{question}

The only known instances where Question~\ref{general semigroup question} has an
affirmative answer are the results of the present paper, the results of
our previous paper \cite{lines-2}, the Mordell--Lang conjecture itself,
and various cases with $r=1$
\cite{Skolem,Mahler,Lech,Denis,Bell,lines,PR,p-adic,Jason}.

Here is a rough sketch of our proof for Theorem~\ref{main}. At the
expense of replacing $V$ by the Zariski closure of the intersection
$V(\bC)\cap \OO_S(\alpha)$, we may assume that $\OO_S(\alpha)$ is
Zariski dense in $V$. In particular, letting $\Gamma$ be the finitely
generated subgroup
of $G(\bC)$ generated by $\OO_S(\alpha)$, we obtain that
$V(\bC)\cap\Gamma$ is Zariski dense in $V$. So, assuming $V$ is
irreducible, Theorem~\ref{T:F} yields that $V$ is a coset of an
algebraic subgroup of $G$. We may choose a suitable prime number $p$
and then find a suitable model of the semiabelian variety $G$ over
$\Z_p$; finally, using the $p$-adic exponential map on $G$, we reduce
the problem to a question of linear algebra inside $\bC_p^g$ (where
$\dim(G)=g$). Given finitely many commuting matrices $J_i\in
M_{g,g}(\bC_p)$ (for $i=1,\dots,r$), given $u_0,v_0\in \bC_p^g$, and
given a linear subvariety $\cV\subset \bC_p^g$, we need to describe
the set $E$ of tuples $(n_1,\dots,n_r)\in\N^r$ such that
$J_1^{n_1}\cdots J_r^{n_r}(u_0)\in \left(v_0 + \cV\right)$. We show
that $E$ has the description from the conclusion of Theorem~\ref{main}
if either (a) each $J_i$ is diagonalizable, or (b) $\dim(\cV)=1$ and
$v_0$ is the zero vector, or (c) $g=2$ and the matrices $J_i$ are all
defined over the ring of integers in a quadratic imaginary field.

The contents of this paper are as follows.
In the next section we prove some basic results regarding semigroups of $\N^r$ which are used throughout our proofs. As a consequence of our results from Section~\ref{results on semigroups}, we prove in Proposition~\ref{zero-dimensional} that Question~\ref{general semigroup question} has a positive answer in greater generality assuming $\dim(V)=0$. Then in Section~\ref{reductions} we prove several preliminary results towards proving 
Theorem~\ref{main}, while in Section~\ref{p-adic section} we reduce the proof of our results to propositions on linear algebra by using suitable $p$-adic parametrizations. Then in Section~\ref{our main proofs} we prove Theorem~\ref{main}.
Finally, in Section~\ref{counterexamples} we provide counterexamples to our theorem if we
relax its hypothesis. 

\begin{notation}
In this paper, each subvariety is closed, and each semigroup contains an
identity element.  We also denote by $\N$ the set of all nonnegative integers.

For any algebraic group $G$, we denote by $\End(G)$ the set of all algebraic
group endomorphisms of $G$.

For any set $T$, for any set $S$ of maps $\Psi\colon T\to T$, and for any
$\alpha\in T$, we let $\OO_S(\alpha):=\{\Psi(\alpha)\text{ : }\Psi\in S\}$ be
the $S$-orbit of $\alpha$.

We will use the following definition: if $\Phi$ is an endomorphism of the quasiprojective variety $X$, and if $V$ is a subvariety of $X$, then we say that $V$ is \emph{preperiodic} under $\Phi$ if there exist $k,\ell\in \N_0$ with $k< \ell$ such that $\Phi^k(V)=\Phi^{\ell}(V)$. If we may take $k=0$ in the above definition, then we say that $V$ is \emph{periodic} under $\Phi$.
\end{notation}

\begin{acknowledgments}
We would like to thank Pascal Autissier, Cristin Kenney, Fabien
Pazuki, and Thomas Scanlon for many helpful conversations, and we also thank the referee for several useful suggestions. We would
also like to thank the Centro di Ricerca Matematica Ennio De
Giorgi and the University of Bordeaux 1 for their hopsitality.
\end{acknowledgments}
 

\section{Semigroups of $\N^r$}
\label{results on semigroups}

In this section we provide basic results regarding subsets of $\N^r$. We start with a definition.
\begin{defi}
\label{the class}
We denote by $\CC$ the class of all sets of the form
$$\bigcup_{i=1}^{\ell} \left(\gamma_i + H_i\cap \N^r\right),$$
for some nonnegative integers $\ell$ and $r$, and for some $\gamma_i\in \N^r$, and for some subgroups $H_i\subset \Z^r$.
\end{defi}

In our proofs we will use the following order relation on $\N^r$.
\begin{defi}
We define the order $\prec_r$ on $\N^r$ as follows:
$$(\beta_1,\dots,\beta_r)\prec_r (\gamma_1,\dots,\gamma_r)$$
if and only if $\beta_i\le \gamma_i$ for all $i=1,\dots,r$. Equivalently, $\beta \prec_r \gamma$ as elements of $\Z^r$ if and only if $\gamma-\beta\in \N^r$. When it is clear in the context, we will drop the index $r$ from $\prec_r$ and simply denote it by $\prec$.

Additionally, for each subset $S\subset \N^r$, we call $\gamma\in S$ a minimal element (with respect to the above ordering) if there exists no $\beta\in S\setminus\{\gamma\}$ such that $\beta\prec \gamma$. 
\end{defi}

Our first result will be used many times throughout our paper, and it is an easy consequence of the order relation $\prec_r$ on $\N^r$.
\begin{prop}
\label{finitely many minimal elements}
Any subset $S\subset \N^r$ has at most finitely many minimal elements with respect to $\prec_r$.
\end{prop}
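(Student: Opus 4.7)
The plan is to prove this by induction on $r$ (this is essentially Dickson's lemma). The base case $r=1$ is immediate: since $\N$ is well-ordered, a nonempty subset of $\N$ has a unique minimum, which is its only minimal element.

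For the inductive step, assume the result for $r-1$. We may suppose $S$ is nonempty, and fix any $m=(m_1,\dots,m_r)\in S$. The key observation is that if $s\in S$ is minimal and $s\ne m$, then we cannot have $m\prec_r s$, so there must exist an index $i\in\{1,\dots,r\}$ with $s_i<m_i$. This forces $s$ to lie in one of the ``slices''
\[
S_{i,k}:=\{s\in S \col s_i=k\}, \qquad 1\le i\le r,\ 0\le k<m_i,
\]
of which there are only finitely many (namely $m_1+\dots+m_r$).

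Next I would observe that each $S_{i,k}$ may be identified with a subset of $\N^{r-1}$ by dropping the $i$-th coordinate (which is constantly $k$), and that under this identification the restriction of $\prec_r$ to $S_{i,k}$ coincides with $\prec_{r-1}$ on the image. By the induction hypothesis, each $S_{i,k}$ has only finitely many minimal elements. Moreover, any element of $S$ that is minimal in $S$ and belongs to $S_{i,k}$ is a fortiori minimal in $S_{i,k}$ (since $S_{i,k}\subset S$). Combining these facts, the set of minimal elements of $S$ is contained in
\[
\{m\}\ \cup\ \bigcup_{i=1}^{r}\bigcup_{k=0}^{m_i-1}\{\text{minimal elements of }S_{i,k}\},
\]
which is a finite union of finite sets, hence finite.

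There is no real obstacle here; the only point requiring a moment's care is the identification of minimality in a slice $S_{i,k}\subset\N^r$ with minimality in the projected subset of $\N^{r-1}$, which is immediate because fixing one coordinate makes $\prec_r$ and $\prec_{r-1}$ agree on the remaining coordinates.
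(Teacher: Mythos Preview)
Your proof is correct and is the standard, clean formulation of Dickson's lemma. The paper's proof follows the same overall strategy---induction on $r$, reducing to finitely many slices where the inductive hypothesis applies---but carries it out via a less symmetric decomposition: it projects $S$ onto the last $r-1$ coordinates, takes the (finitely many) minimal elements of the projection, lifts each to the fiber element with smallest first coordinate to form a set $M_0$, bounds the first coordinate of any minimal element by $\ell:=\max\{\text{first coord.\ in }M_0\}$, and then slices only in the first coordinate direction. Your version, by contrast, fixes an arbitrary $m\in S$ and observes that every minimal element other than $m$ must lie in one of the hyperplane slices $\{s_i=k\}$ for some $i$ and some $k<m_i$; this avoids the intermediate projection step and the bound $\ell$, and treats all coordinates symmetrically. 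Both arguments are equivalent in strength, but yours is shorter and more transparent.
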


\begin{proof}
Assume $S$ is nonempty (otherwise the statement is vacuously true). We prove the result by induction on $r$. The case $r=1$ is obvious.

Now, assume the result is proved for $r\le k$ and we prove it for $r=k+1$ (for some $k\in\N$). Let $\pi$ be the natural projection of $\N^{k+1}$ on its last $k$ coordinates, and let $S_1=\pi(S)$. Let $M_1$ be the set of minimal elements for $S_1$ with respect to the ordering $\prec_k$ of $\N^k$ defined on the last $k$ coordinates of $\N^{k+1}$; according to the induction hypothesis, $M_1$ is a finite set. For each $\overline{\gamma_1}\in M_1$, we let $\overline{\gamma}\in S$ be the minimal element of $\pi^{-1}(\overline{\gamma_1})\cap S$ with respect to the ordering $\prec_{k+1}$ on $\N^{k+1}$. Obviously, $\overline{\gamma}=(\gamma_1,\overline{\gamma_1})\in \N\times \N^k$ with $\gamma_1\in \N$ minimal such that $\pi(\overline{\gamma})=\overline{\gamma_1}$; we let $M_0$ be the finite set of all these $\overline{\gamma}$.

Let $\ell$ be the maximum of all $\gamma_1\in \N$ such that there exists $\overline{\gamma_1}\in M_1$ for which $(\gamma_1,\overline{\gamma_1})\in M_0$. Then for each $i=0,\dots, \ell-1$, we let $S_{1,i} =\pi\left(S\cap \left(\{i\}\times \N^k\right)\right)$. Again using the induction hypothesis, there exists at most a finite set $M_{1,i}$ of minimal elements of $S_{1,i}$ with respect to $\prec_k$. We let $M_{0,i}$ be the set (possibly empty) given by $M_{0,i}:=\left(\{i\}\times \N^{k}\right)\cap \pi^{-1}\left(M_{1,i}\right)$, and we let $M:=M_0\bigcup \left(\bigcup_{i=0}^{\ell-1} M_{0,i}\right)$. Then $M$ is a finite set, and we claim that it contains all minimal elements of $S$. Indeed, if $(\beta_1,\dots,\beta_{k+1})\in S\setminus M$ is a minimal element with respect to $\prec_{k+1}$, then there are two possibilities.

{\bf Case 1.} $(\beta_2,\dots,\beta_{k+1})\in M_1$, i.e. $(\beta_2,\dots,\beta_{k+1})$ is a minimal element of $S_1$.

In this case, since $(\beta_1,\dots,\beta_{k+1})\notin M_0$, we get (from the definition of $M_0$) that there exists $j\in \N$ such that $j<\beta_1$ and $(j,\beta_2,\dots,\beta_{k+1})\in M_0$ contradicting thus the minimality of $(\beta_1,\dots,\beta_{k+1})$ in $S$.

{\bf Case 2.} $(\beta_2,\dots,\beta_{k+1})\notin M_1$, i.e. $(\beta_2,\dots,\beta_{k+1})$ is not a minimal element of $S_1$.

In this case, there exists $(\gamma_1,\overline{\gamma_1})\in M_0$ such that $\overline{\gamma_1}\prec_k (\beta_2,\dots,\beta_{k+1})$. Now, since we assumed that $(\beta_1,\dots,\beta_{k+1})$ is a minimal element of $S$, we obtain that $\beta_1 < \gamma_1\le \ell$. Since we assumed that $(\beta_1,\dots,\beta_{k+1})\notin M_{0,\beta_1}$, then there exists $(\beta_1,\dots,\beta_{k+1})\ne (\beta_1,\overline{\beta_1})\in M_{0,\beta_1}$ such that $\overline{\beta_1}\prec_k (\beta_2,\dots,\beta_{k+1})$, and thus $(\beta_1,\overline{\beta_1})\prec_{k+1} (\beta_1,\beta_2\dots,\beta_{k+1})$ contradicting our assumption on the minimality of $(\beta_1,\dots,\beta_{k+1})$.

Therefore $M$ is a finite set containing all minimal elements of $S$.
\end{proof}

In particular, Proposition~\ref{finitely many minimal elements} yields the following result.
\begin{prop}
\label{finitely generated semigroups}
Let $H\subset \Z^r$ be a subgroup. Then $H\cap\N^r$ is a finitely generated semigroup of $\N^r$.
\end{prop}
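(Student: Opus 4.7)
The plan is to apply Proposition~\ref{finitely many minimal elements} to the set $T := (H\cap \N^r) \setminus \{0\}$ of nonzero elements. Let $M$ be the (necessarily finite) set of minimal elements of $T$ with respect to $\prec_r$. I claim that $M$ generates $H\cap \N^r$ as a semigroup (recalling that our semigroups contain an identity, namely $0$).

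To establish the claim, I would argue by strong induction on the sum $|\gamma| := \gamma_1 + \cdots + \gamma_r$, showing that every $\gamma = (\gamma_1,\dots,\gamma_r) \in H\cap\N^r$ can be written as a sum of elements of $M$. The base case $|\gamma|=0$ is immediate, since then $\gamma = 0$ is the empty sum. For the inductive step, let $\gamma \in T$. If $\gamma \in M$, then $\gamma$ is itself a generator. Otherwise, by the definition of $M$ as the set of minimal elements of $T$, there exists $\beta \in M$ with $\beta \prec_r \gamma$ and $\beta \neq \gamma$. Then $\gamma - \beta \in \N^r$ (by the definition of $\prec_r$), and $\gamma - \beta \in H$ (since $H$ is a subgroup and $\gamma,\beta \in H$), so $\gamma - \beta \in H\cap \N^r$ with $|\gamma - \beta| < |\gamma|$. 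The induction hypothesis then expresses $\gamma - \beta$ as a sum of elements of $M$, and appending $\beta$ gives the desired expression for $\gamma$.

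The only real input beyond routine bookkeeping is Proposition~\ref{finitely many minimal elements}, which supplies the finiteness of $M$; the rest is just the combination of the two closure properties of $H\cap \N^r$ under the partial order $\prec_r$ (namely that $\beta \prec_r \gamma$ guarantees both $\gamma - \beta \in \N^r$ and $\gamma - \beta \in H$). I do not anticipate any genuine obstacle here: the argument is essentially Dickson's lemma applied inside the subgroup $H$, and the induction on $|\gamma|$ terminates because each subtraction strictly decreases the coordinate sum.
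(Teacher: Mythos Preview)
Your proposal is correct and follows essentially the same approach as the paper: both take $M$ to be the finite set of $\prec$-minimal elements of $(H\cap\N^r)\setminus\{0\}$ (via Proposition~\ref{finitely many minimal elements}) and then verify that $M$ generates $H\cap\N^r$ by a descent argument. The only cosmetic difference is that the paper phrases the descent as a minimal-counterexample contradiction while you phrase it as strong induction on the coordinate sum; these are equivalent packagings of the same well-foundedness of $\prec$ on $\N^r$.
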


\begin{proof}
Assume $S:=H\cap \N^r$ is not the trivial semigroup (in which case our claim is vacuously true). Hence, let $M$ be the finite set of all minimal elements with respect to $\prec$ of $S\setminus (0,\dots,0)$. We claim that $S$ is the semigroup generated by $M$. 

Indeed, assume the semigroup $S_0$ generated by $M$ is not equal to $S$. Clearly $S_0\subset S$ since $S$ is the intersection of $\N^r$ with a subgroup of $\Z^r$. Let $\gamma$ be a minimal element with respect to $\prec$ of $S\setminus S_0$. Since $(0,\dots,0)\ne \gamma\notin M$, we conclude that there exists $\gamma\ne \beta\in M\subset S_0$ such that $\beta\prec \gamma$. But then $\gamma-\beta\in \N^r$ and since both $\beta$ and $\gamma$ are also in $H$, we get that $\gamma-\beta\in S$. Since $\gamma\notin S_0$, then also $\gamma-\beta\notin S_0$, but on the other hand $\gamma \ne(\gamma - \beta) \prec \gamma$ contradicting thus the minimality of $\gamma$.

This concludes our proof that $S$ is generated as a subsemigroup of $\N^r$ by the finite set $M$.
\end{proof}

The following propositions yield additional properties of subsemigroups of $\N^r$.
\begin{prop}
\label{cosets of subgroups yield subsemigroups}
For any subgroup $H\subset \Z^r$, and for any $\gamma\in \Z^r$, the intersection $(\gamma+H)\cap \N^r$ is a union of at most finitely many cosets of subsemigroups of the form $\beta + (H\cap \N^r)$.
\end{prop}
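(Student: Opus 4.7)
The plan is to exhibit the decomposition directly from the finiteness of minimal elements. Set $S:=(\gamma+H)\cap\N^r$; if $S$ is empty the statement holds vacuously, so assume $S\ne\emptyset$. Note that for any $\delta\in S$ we have $\gamma+H=\delta+H$, so $S=(\delta+H)\cap\N^r$, which lets us replace $\gamma$ by any element of $S$ if convenient.

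Apply Proposition~\ref{finitely many minimal elements} to produce the finite set $M=\{\beta_1,\dots,\beta_m\}\subset S$ of minimal elements of $S$ with respect to $\prec_r$. I claim that
\[
S \;=\; \bigcup_{i=1}^{m}\bigl(\beta_i+(H\cap\N^r)\bigr),
\]
which is exactly the desired form.

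The inclusion $\supset$ is immediate: for any $h\in H\cap\N^r$, the sum $\beta_i+h$ lies in $\N^r$ (sum of elements of $\N^r$) and in $\gamma+H$ (since $\beta_i\in\gamma+H$ and $h\in H$). For the reverse inclusion, fix $\delta\in S$. The set $\{\delta'\in S:\delta'\prec_r\delta\}$ is nonempty (it contains $\delta$) and bounded coordinatewise by $\delta$, hence finite; pick an element $\beta$ minimal in this finite set. Any $\delta''\in S$ with $\delta''\prec_r\beta$ automatically satisfies $\delta''\prec_r\delta$, so by minimality $\delta''=\beta$, which shows $\beta$ is a minimal element of $S$, i.e.\ $\beta=\beta_i$ for some $i$. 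Then $h:=\delta-\beta_i\in\N^r$ because $\beta_i\prec_r\delta$, and $h\in H$ because $\delta$ and $\beta_i$ both lie in the single coset $\gamma+H$; hence $h\in H\cap\N^r$ and $\delta\in\beta_i+(H\cap\N^r)$.

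The only subtle point is the existence of a minimal element of $S$ below any given $\delta\in S$, and the observation that it remains minimal in all of $S$ (not just in $\{\delta'\in S:\delta'\prec_r\delta\}$); both follow from the finiteness of the latter set together with transitivity of $\prec_r$. Everything else is a direct unpacking of the definition of the coset $\gamma+H$, so I do not anticipate a serious obstacle beyond that bookkeeping.
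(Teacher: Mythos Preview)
Your proof is correct and follows essentially the same approach as the paper: take the finite set $M$ of $\prec_r$-minimal elements of $S=(\gamma+H)\cap\N^r$ and show $S=\bigcup_{\beta\in M}\bigl(\beta+(H\cap\N^r)\bigr)$ by checking both inclusions. The only difference is that you spell out explicitly why every $\delta\in S$ lies above some minimal element (via the finiteness of $\{\delta'\in S:\delta'\prec_r\delta\}$), a step the paper's proof takes for granted.
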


\begin{proof}
We let $M$ be the finite set (see Proposition~\ref{finitely many minimal elements}) of minimal elements with respect to $\prec$ for the set $(\gamma+H)\cap \N^r$. We claim that
$$(\gamma+H)\cap \N^r = \bigcup_{\beta \in M} \beta + (H\cap \N^r).$$ 
Indeed, the reverse inclusion is immediate. Now, for the direct inclusion, if $\tau\in (\gamma+H)\cap \N^r$, then there exists $\beta\in M$ such that $\beta\prec \tau$. So, $\tau-\beta\in \N^r$. On the other hand, since both $\beta$ and $\tau$ are in $\gamma+H$, we also get that $\tau-\beta\in H$, as desired. 
\end{proof}

\begin{prop}
\label{intersection of cosets yields cosets}
For any $\gamma_1,\gamma_2\in\N^r$ and for any subgroups $H_1,H_2\subset \Z^r$, the intersection $\left(\gamma_1 + (H_1\cap \N^r)\right)\cap \left(\gamma_2 + (H_2\cap \N^r)\right)$ is a union of at most finitely many cosets of subsemigroups of the form $\beta + (H\cap \N^r)$, where $H:=H_1\cap H_2$.
\end{prop}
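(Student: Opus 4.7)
The plan is to reduce directly to Proposition~\ref{cosets of subgroups yield subsemigroups} by separating the ``coset'' part of the condition (which lives in $\Z^r$) from the ``positivity'' part (which lives in $\N^r$).

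First I would analyze the intersection set-theoretically in $\Z^r$. A point $x\in\N^r$ lies in $\bigl(\gamma_1+(H_1\cap\N^r)\bigr)\cap\bigl(\gamma_2+(H_2\cap\N^r)\bigr)$ if and only if $x\in(\gamma_1+H_1)\cap(\gamma_2+H_2)$ together with $x-\gamma_1\in\N^r$ and $x-\gamma_2\in\N^r$. If the $\Z^r$-intersection $(\gamma_1+H_1)\cap(\gamma_2+H_2)$ is empty, the conclusion is trivial. Otherwise, a standard fact about cosets of subgroups shows that picking any element $\gamma$ in this intersection yields $(\gamma_1+H_1)\cap(\gamma_2+H_2)=\gamma+H$, where $H:=H_1\cap H_2$; indeed, for any other $\gamma'$ in the intersection, $\gamma'-\gamma$ lies in both $H_1$ and $H_2$.

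Next I would collapse the two positivity conditions into one. Let $\mu\in\N^r$ be the componentwise maximum of $\gamma_1$ and $\gamma_2$; then $x-\gamma_1\in\N^r$ and $x-\gamma_2\in\N^r$ is equivalent to $x-\mu\in\N^r$. Substituting $y:=x-\mu$, the set we are trying to describe becomes
$$\mu \;+\; \Bigl(\bigl((\gamma-\mu)+H\bigr)\cap\N^r\Bigr).$$
Now Proposition~\ref{cosets of subgroups yield subsemigroups} applies to $\bigl((\gamma-\mu)+H\bigr)\cap\N^r$, expressing it as a finite union $\bigcup_i \bigl(\beta'_i+(H\cap\N^r)\bigr)$ with $\beta'_i\in\N^r$. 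Translating by $\mu\in\N^r$ produces the desired finite union $\bigcup_i\bigl((\mu+\beta'_i)+(H\cap\N^r)\bigr)$, and each translate $\mu+\beta'_i$ again lies in $\N^r$.

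There is no real obstacle in this argument; the only point that needs care is the passage from the two inequalities $x\succ_r\gamma_1$ and $x\succ_r\gamma_2$ (allowing equality) to the single inequality $x\succ_r\mu$, and the observation that under this translation the set inherits exactly the structure guaranteed by the previous proposition. The whole proof amounts to a change of variables plus an appeal to Proposition~\ref{cosets of subgroups yield subsemigroups}.
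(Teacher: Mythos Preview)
Your argument is correct. The key observations --- that membership in $\gamma_i+(H_i\cap\N^r)$ factors as a coset condition in $\Z^r$ together with the componentwise inequality $x\succeq\gamma_i$, that the two inequalities combine into the single inequality $x\succeq\mu$ with $\mu$ the coordinatewise maximum, and that after the translation by $\mu$ one lands exactly in the setting of Proposition~\ref{cosets of subgroups yield subsemigroups} --- are all sound, and the invocation of that proposition with $\gamma-\mu\in\Z^r$ (not necessarily in $\N^r$) is legitimate since the proposition is stated for arbitrary $\gamma\in\Z^r$.

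The paper proceeds differently: it does not reduce to Proposition~\ref{cosets of subgroups yield subsemigroups} at all, but instead repeats the minimal-element argument directly. One takes $M$ to be the finite set of $\prec$-minimal elements of the intersection (Proposition~\ref{finitely many minimal elements}) and checks that the intersection equals $\bigcup_{\beta\in M}\bigl(\beta+(H\cap\N^r)\bigr)$; the only point is that if $\tau$ lies in the intersection and $\beta\in M$ with $\beta\prec\tau$, then $\tau-\beta$ lies in $\N^r$ and in $H_1\cap H_2$. Your route is a genuine reduction rather than a repetition: by isolating the coset part and packaging the two positivity constraints into one via the coordinatewise maximum, you make Proposition~\ref{cosets of subgroups yield subsemigroups} do the work. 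The paper's approach is marginally more self-contained, while yours is more structural and avoids rerunning the same minimal-element argument a second time.
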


\begin{proof}
We let $M$ be the finite set (see Proposition~\ref{finitely many minimal elements}) of minimal elements with respect to $\prec$ for the set $\left(\gamma_1+(H_1\cap \N^r)\right)\cap \left(\gamma_2 + (H_2\cap \N^r)\right)$. We claim that
$$\left(\gamma_1+(H_1\cap \N^r)\right)\cap \left(\gamma_2+(H_2\cap \N^r)\right) = \bigcup_{\beta \in M} \beta + (H\cap \N^r).$$ 
Indeed, the reverse inclusion is immediate. Now, for the direct inclusion, if $\tau\in \left(\gamma_1+(H_1\cap \N^r)\right)\cap \left(\gamma_2+(H_2\cap \N^r)\right)$, then there exists $\beta\in M$ such that $\beta\prec \tau$. So, $\tau-\beta\in \N^r$. On the other hand, since both $\beta$ and $\tau$ are in $\gamma_1+H_1$ and in $\gamma_2+H_2$, we also get that $\tau-\beta\in H_1\cap H_2=H$, as desired. 
\end{proof}

\begin{prop}
\label{important step semigroup}
Let $\gamma\in \N^r$, let $H\subset \Z^r$ be a subgroup, and let $N\in\N$ be a positive integer. Assume that for a subset $L\subset \left(\gamma + H\cap \N^r\right)$, we have that for each $\tau\in H\cap \N^r$, and for each $\beta\in L$, then also $\beta +N\cdot \tau\in L$. Then $L$ is a union of at most finitely many cosets of subsemigroups of the form $\delta + \left((N\cdot H)\cap \N^r\right)$.
\end{prop}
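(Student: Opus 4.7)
The plan is to partition $L$ according to residue classes modulo $NH$ and then apply Proposition~\ref{finitely many minimal elements} to each piece. Since $H\subset\Z^r$ is free abelian of finite rank, the quotient $H/(NH)$ is finite, which will supply the finiteness in the conclusion.

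First I would observe the identity $(NH)\cap\N^r = N\cdot(H\cap\N^r)$: the inclusion $\supseteq$ is immediate, while if $\sigma\in(NH)\cap\N^r$ then $\sigma = N\tau$ for some $\tau\in H\subset\Z^r$, and $N\tau\in\N^r$ with $N$ positive forces $\tau\in\N^r$, hence $\tau\in H\cap\N^r$. Consequently the hypothesis on $L$ can be restated as: for every $\beta\in L$ and every $\sigma\in(NH)\cap\N^r$, one has $\beta+\sigma\in L$. Next, for each class $c\in H/(NH)$, I would set
\[
L_c := \{\beta\in L : (\beta-\gamma) + NH = c\},
\]
which is well-defined because $L\subset\gamma + H\cap\N^r$ forces $\beta-\gamma\in H$. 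This yields a partition of $L$ into finitely many pieces, and for each nonempty $L_c$, Proposition~\ref{finitely many minimal elements} furnishes a finite set $M_c$ of $\prec$-minimal elements.

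The heart of the argument is the claim
\[
L_c = \bigcup_{\delta\in M_c}\bigl(\delta + ((NH)\cap\N^r)\bigr).
\]
The inclusion $\supseteq$ follows from the reformulated closure property combined with the fact that adding an element of $NH$ preserves the class $c$. For $\subseteq$, given $\beta\in L_c$, the set $\{\sigma\in L_c : \sigma\prec\beta\}$ is nonempty and sits inside the finite box $\prod_{i=1}^r\{0,1,\dots,\beta_i\}$, so it contains a $\prec$-minimal element $\delta$, and such a $\delta$ is automatically in $M_c$. Then $\beta-\delta$ lies simultaneously in $\N^r$, in $H$ (since $\beta,\delta\in\gamma+H$), and in $NH$ (since $\beta$ and $\delta$ share the class $c$), so $\beta-\delta\in(NH)\cap\N^r$, as needed.

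I do not foresee any serious obstacle. The only mildly delicate point is the identification $(NH)\cap\N^r = N(H\cap\N^r)$, which is what correctly translates the additive closure hypothesis into a statement about cosets of $(NH)\cap\N^r$; the rest is a bookkeeping exercise combining the finiteness of $H/(NH)$ with the well-foundedness of $\prec$ that is already packaged in Proposition~\ref{finitely many minimal elements}.
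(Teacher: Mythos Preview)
Your proof is correct and follows essentially the same approach as the paper: both arguments exploit the finiteness of $H/(NH)$, partition $L$ into pieces lying in a single coset of $NH$, and then show each piece is the union over its $\prec$-minimal elements of translates of $(NH)\cap\N^r$. The only cosmetic differences are that the paper first decomposes the ambient set $\gamma + H\cap\N^r$ via Proposition~\ref{cosets of subgroups yield subsemigroups} before intersecting with $L$, whereas you partition $L$ directly by residue class of $\beta-\gamma$ in $H/(NH)$; and you make explicit the identity $(NH)\cap\N^r = N\cdot(H\cap\N^r)$, which the paper uses tacitly when invoking the hypothesis.
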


\begin{proof}
Since $H/N\cdot H$ is finite, we may write $H$ as a finite union $\bigcup_{i=1}^s \tau_i + N\cdot H$ for some $\tau_i\in \Z^r$. Using Proposition~\ref{cosets of subgroups yield subsemigroups}, we conclude that each 
$$\gamma + \left(\tau_i + N\cdot H\right)\cap \N^r$$
is itself in the class $\CC$ (see Definition~\ref{the class}); more precisely we may write
$$\gamma + H\cap \N^r = \bigcup_{i=1}^s \left(\gamma + \left(\tau_i+N\cdot H\right)\cap \N^r\right) = \bigcup_{\beta\in M} \left(\beta+\left((N\cdot H)\cap \N^r\right)\right),$$
for a suitable finite subset $M$ of $\N^r$.
Therefore, it suffices to show that for each $\beta\in M$, the set 
$$L_{\beta}:=L\cap  \left(\beta+\left((N\cdot H)\cap \N^r\right)\right)$$
satisfies the conclusion of our Proposition~\ref{important step semigroup}. Now, for each such $\beta\in M$, we let $M_{\beta}$ be the finite set of minimal elements of $L_{\beta}$ with respect to the ordering $\prec$ of $\N^r$. Then, according to our hypothesis, we do have that
$$\bigcup_{\delta\in M_{\beta}} \left(\delta + \left((N\cdot H)\cap \N^r\right)\right)\subset L_{\beta}.$$
We claim that the above inclusion is in fact an equality. Indeed, for any $\zeta\in L_{\beta}$, there exists $\delta\in M_{\beta}$ such that $\delta\prec \zeta$. Therefore $\zeta-\delta\in \N^r$. On the other hand, both $\zeta$ and $\delta$ are in $\beta + N\cdot H$, which means that $\zeta-\delta\in N\cdot H$, and thus
$\zeta-\delta\in (N\cdot H)\cap \N^r$, as desired.
\end{proof}

\begin{prop}
\label{the cosets class is closed under projections}
Let $\pi_1:\N^r\lra \N$ be the projection on the first coordinate, and let $\pi_{2,\dots,r}:\N^r\lra \N^{r-1}$ be the projection on the last $(r-1)$ coordinates. Then for each $a\in \N$, for each $\gamma\in \N^r$ and for each subgroup $H\subset \Z^r$, we have that $\pi_{2,\dots,r}\left(\pi_1^{-1}(a)\cap (\gamma + H\cap \N^r)\right)$ is in the class $\CC$.
\end{prop}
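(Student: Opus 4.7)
The plan is to reduce the claim to an application of Proposition~\ref{cosets of subgroups yield subsemigroups}, via a base-point argument that identifies the image as a coset of a subgroup of $\Z^{r-1}$ intersected with $\N^{r-1}$.

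First, I would dispense with the trivial case in which $\pi_1^{-1}(a)\cap (\gamma+H\cap\N^r)$ is empty: then its projection is empty and is in $\CC$ tautologically. Otherwise, writing $\gamma=(\gamma_1,\dots,\gamma_r)$, an element of $\pi_1^{-1}(a)\cap (\gamma+H\cap\N^r)$ is precisely $\gamma+h$ with $h\in H$, $h_1=a-\gamma_1$, and $\gamma_i+h_i\ge 0$ for $i\ge 2$. Fix one such $h^0=(a-\gamma_1,h_2^0,\dots,h_r^0)\in H$.

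Next, I would introduce the subgroup
\[ H':=\{(k_2,\dots,k_r)\in\Z^{r-1}\,:\,(0,k_2,\dots,k_r)\in H\}\subset\Z^{r-1}. \]
Any other $h\in H$ with $h_1=a-\gamma_1$ differs from $h^0$ by an element of $\{0\}\times H'$, so the set of admissible $(h_2,\dots,h_r)$ equals the coset $(h_2^0,\dots,h_r^0)+H'$. Setting $\delta_i:=\gamma_i+h_i^0$ for $i=2,\dots,r$, the projection in question becomes
\[ \pi_{2,\dots,r}\bigl(\pi_1^{-1}(a)\cap(\gamma+H\cap\N^r)\bigr) \;=\; \bigl((\delta_2,\dots,\delta_r)+H'\bigr)\cap \N^{r-1}, \]
since the requirement $\gamma_i+h_i\ge 0$ is exactly the requirement that $\delta_i+k_i\ge 0$ for the shifted variable $k_i:=h_i-h_i^0$.

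Finally, Proposition~\ref{cosets of subgroups yield subsemigroups} applied to the subgroup $H'\subset\Z^{r-1}$ and the element $(\delta_2,\dots,\delta_r)\in\Z^{r-1}$ shows that the right-hand side is a finite union of cosets of the form $\beta+(H'\cap\N^{r-1})$, which is by definition in the class $\CC$. There is no real obstacle here; the only thing to be careful about is that the positivity constraint $\gamma_i+h_i\ge 0$ for $i\ge 2$ translates cleanly into intersecting the coset $(\delta_2,\dots,\delta_r)+H'$ with $\N^{r-1}$, so that the previously proved proposition directly applies.
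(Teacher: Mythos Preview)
You have misparsed the set $\gamma+H\cap\N^r$: in this paper it means $\gamma+(H\cap\N^r)$, not $(\gamma+H)\cap\N^r$ (compare Definition~\ref{the class} with the statement of Proposition~\ref{cosets of subgroups yield subsemigroups}, which explicitly distinguishes the two). Under the correct reading, an element is $\gamma+h$ with $h\in H$ \emph{and} $h_i\ge 0$ for all $i$, not merely $\gamma_i+h_i\ge 0$. Your displayed identity therefore fails: for $r=2$, $H=\Z^2$, $\gamma=(1,1)$, $a=1$, the true projection of $\pi_1^{-1}(1)\cap\bigl((1,1)+\N^2\bigr)$ is $\{1,2,3,\dots\}$, whereas your formula gives $(\delta_2+H')\cap\N=\N$ regardless of the base point chosen.

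The fix is minor and stays within your strategy: with the correct constraint $h_i\ge 0$, the projection is
\[
(\gamma_2,\dots,\gamma_r)+\Bigl(\bigl((h_2^0,\dots,h_r^0)+H'\bigr)\cap\N^{r-1}\Bigr),
\]
i.e.\ a translate by $(\gamma_2,\dots,\gamma_r)\in\N^{r-1}$ of a set to which Proposition~\ref{cosets of subgroups yield subsemigroups} applies, and a translate by an element of $\N^{r-1}$ of something in $\CC$ is again in $\CC$. (You must also observe that $a\ge\gamma_1$ in the nonempty case, so that $h_1=a-\gamma_1\ge 0$ is consistent with $h\in\N^r$.) The paper's own proof takes a slightly different route: it observes that $\pi_1^{-1}(a)$ is itself in $\CC$, invokes Proposition~\ref{intersection of cosets yields cosets} to reduce to the case $\gamma+(H\cap\N^r)\subset\pi_1^{-1}(a)$, and then projects directly; both arguments ultimately rest on Proposition~\ref{cosets of subgroups yield subsemigroups}.
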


\begin{proof}
Using Propositions~\ref{cosets of subgroups yield subsemigroups} and \ref{intersection of cosets yields cosets} we may assume that $\gamma +(H\cap \N^r)\subset \pi_1^{-1}(a)$ since $\pi_1^{-1}(a)$ is itself in the class $\CC$, and thus its intersection with $\gamma + H\cap \N^r$ is again in the class $\CC$. The conclusion of Proposition~\ref{the cosets class is closed under projections} follows since the projection of any subgroup is also a subgroup. 
\end{proof}

\begin{prop}
\label{reduction of r}
Let $L\subset \N^r$, and let $\beta:=(\beta_1,\dots,\beta_r)\in L$. For each $i=1,\dots,r$ and for each $j=0,\dots,\beta_i-1$, we let
$$L_{i,j}:=\{(\gamma_1,\dots,\gamma_r)\in L\text{ : }\gamma_i=j\}
\quad \text{and} \quad L_{\beta}:=\{\gamma\in L\text{ : }\beta\prec \gamma\}.$$
Then 
\begin{equation}
\label{decomposition of the L}
L=L_{\beta}\bigcup \left(\bigcup_{i,j}L_{i,j}\right).
\end{equation}
Moreover, if $L$ is in the class $\CC$, then each $L_{i,j}$ and $L_{\beta}$ are in the class $\CC$.
\end{prop}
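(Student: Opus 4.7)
The plan is to handle the two assertions in turn, with the decomposition being a direct set-theoretic computation and the closure claim being a routine application of Proposition~\ref{intersection of cosets yields cosets}.

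First I would verify the decomposition \eqref{decomposition of the L}. The inclusion $\supset$ is built in, since each $L_{i,j}$ and $L_\beta$ is a subset of $L$ by definition. For the inclusion $\subset$, take any $\gamma=(\gamma_1,\dots,\gamma_r)\in L$. Either $\beta\prec\gamma$, in which case $\gamma\in L_\beta$, or there is some index $i$ with $\gamma_i<\beta_i$; setting $j:=\gamma_i\in\{0,\dots,\beta_i-1\}$ puts $\gamma$ into $L_{i,j}$. This exhausts the cases.

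Next I would show closure in $\CC$. Suppose $L=\bigcup_{k=1}^{\ell}\bigl(\gamma_k+H_k\cap\N^r\bigr)$ for some $\gamma_k\in\N^r$ and subgroups $H_k\subset\Z^r$. Observe that each of the auxiliary sets used above already belongs to $\CC$:
\begin{equation*}
\{\gamma\in\N^r:\gamma_i=j\}=j\,e_i+\bigl(H_i^{(0)}\cap\N^r\bigr),\qquad H_i^{(0)}:=\{\tau\in\Z^r:\tau_i=0\},
\end{equation*}
and
\begin{equation*}
\beta+\N^r=\beta+\bigl(\Z^r\cap\N^r\bigr).
\end{equation*}
Therefore $L_{i,j}=L\cap\bigl(j\,e_i+H_i^{(0)}\cap\N^r\bigr)$ and $L_\beta=L\cap(\beta+\N^r)$. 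Distributing the intersection over the finite union expressing $L$, each term becomes an intersection of two sets of the form appearing in Proposition~\ref{intersection of cosets yields cosets}, hence itself a finite union of cosets of subsemigroups. Taking the finite union over $k=1,\dots,\ell$ shows that $L_{i,j}$ and $L_\beta$ are both members of $\CC$.

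There is no serious obstacle here; the statement is a bookkeeping consequence of the order structure of $\N^r$ together with the closure of $\CC$ under intersections that was already established in Proposition~\ref{intersection of cosets yields cosets}. The only mild point to notice is that one must write $\{\gamma_i=j\}$ and $\beta+\N^r$ as cosets of subgroups of $\Z^r$ intersected with $\N^r$, which is what allows the previously proved intersection lemma to apply directly.
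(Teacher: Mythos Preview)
Your proof is correct and follows essentially the same route as the paper: the decomposition is immediate from the definition of $\prec$, and the closure claim comes from writing $L_\beta=L\cap(\beta+\N^r)$ and $L_{i,j}=L\cap\{\gamma:\gamma_i=j\}$ and invoking the fact that $\CC$ is closed under intersection. The only cosmetic difference is that the paper cites Proposition~\ref{the cosets class is closed under projections} for the $L_{i,j}$ case (whose proof already contains the observation that the hyperplane $\{\gamma_i=j\}$ lies in $\CC$), while you go directly to Proposition~\ref{intersection of cosets yields cosets}; the underlying argument is the same.
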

\begin{proof}
The decomposition of $L$ as in \eqref{decomposition of the L} follows from the definition of the ordering $\prec$ on $\N^r$. The ``moreover'' claim follows from Proposition~\ref{the cosets class is closed under projections} -- note that $L_{\beta}\in\CC$ because 
$$L_{\beta}=L\cap\left(\beta +\N^r\right),$$
and according to Proposition~\ref{intersection of cosets yields cosets}, the class $\CC$ is closed under taking the intersection of any two elements of it.
\end{proof}

Using the results of this Section we can prove that Question~\ref{general semigroup question} \emph{always} has a positive answer when $V$ is zero-dimensional. This result will be used in the inductive hypothesis from the proof of Lemma~\ref{replace by a multiple}.
\begin{prop}
\label{zero-dimensional}
Let $X$ be any quasiprojective variety defined over an arbitrary field $K$, let $\Phi_1,\dots,\Phi_r$ be any set of commuting endomorphisms of $X$, let $\alpha\in X(K)$, and let $V$ be a zero-dimensional subvariety of $X$. Then the set of tuples $(n_1,\dots,n_r)\in\N^r$ for which $\Phi_1^{n_1}\cdots \Phi_r^{n_r}(\alpha)\in V$ belongs to the class $\CC$.
\end{prop}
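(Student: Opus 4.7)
The plan is to reduce to the case where $V=\{\beta\}$ consists of a single point. Since $V$ is zero-dimensional, it is a finite set of closed points, and the set $E$ is the finite union of the corresponding sets for each point of $V$; as the class $\CC$ is closed under finite unions by Definition~\ref{the class}, it suffices to treat one point at a time. So henceforth I would assume $V=\{\beta\}$ and denote, for multi-indices $n=(n_1,\dots,n_r)\in\N^r$, $\Phi^n:=\Phi_1^{n_1}\cdots\Phi_r^{n_r}$, which makes sense because the $\Phi_i$ commute.

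The heart of the argument is the analysis of the \emph{stabilizer monoid}
\[
F:=\{k\in\N^r\col \Phi^k(\beta)=\beta\}.
\]
I would first check that $F$ is a submonoid of $\N^r$: it contains $(0,\dots,0)$, and if $\Phi^k(\beta)=\beta$ and $\Phi^\ell(\beta)=\beta$ then $\Phi^{k+\ell}(\beta)=\Phi^k(\Phi^\ell(\beta))=\Phi^k(\beta)=\beta$, using that the $\Phi_i$ commute. Let $H\subset\Z^r$ be the subgroup generated by $F$. I claim $F=H\cap\N^r$. The inclusion $F\subset H\cap\N^r$ is immediate. Conversely, any $c\in H\cap\N^r$ can be written as $c=a-b$ with $a,b\in F$; since $c\in\N^r$ we have $a_i\ge b_i$ for every $i$, so $a=c+b$ componentwise with entries in $\N$, and then
\[
\beta=\Phi^a(\beta)=\Phi^c(\Phi^b(\beta))=\Phi^c(\beta),
\]
which shows $c\in F$. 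This is the main algebraic step, and I expect it to be the only nontrivial calculation — everything else is bookkeeping.

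With $F=H\cap\N^r$ in hand, I would finish by applying Proposition~\ref{finitely many minimal elements} to the set $E$. Let $M\subset E$ be the (finite) set of minimal elements of $E$ with respect to $\prec_r$. For any $m\in E$ there is some $n\in M$ with $n\prec m$; then $\Phi^{m-n}(\beta)=\Phi^{m-n}(\Phi^n(\alpha))=\Phi^m(\alpha)=\beta$, hence $m-n\in F$ and $m\in n+F$. Conversely, for $n\in M\subset E$ and $k\in F$ we have $\Phi^{n+k}(\alpha)=\Phi^k(\beta)=\beta$, so $n+k\in E$. Therefore
\[
E=\bigcup_{n\in M}\bigl(n+(H\cap\N^r)\bigr),
\]
a finite union of the required form, which places $E$ in the class $\CC$ and completes the proof.
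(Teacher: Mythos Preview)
Your proof is correct and follows essentially the same approach as the paper: reduce to a single point $\beta$, introduce the stabilizer monoid (the paper calls it $L$, you call it $F$), verify it equals $H\cap\N^r$ for the subgroup $H$ it generates, and then express $E$ as the union over the finitely many $\prec$-minimal elements of $E$ of translates of this monoid. Your justification of $F=H\cap\N^r$ via writing $c=a-b$ with $a,b\in F$ is in fact slightly more explicit than the paper's, which only records the key property that $\gamma-\beta\in L$ whenever $\beta,\gamma\in L$ with $\beta\prec\gamma$.
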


\begin{proof}
Obviously, it suffices to prove our result assuming $V$ consists of a single point, call it $\beta$. We let $E$ be the set of all tuples $(n_1,\dots,n_r)\in \N^r$ such that $\Phi_1^{n_1}\cdots \Phi_r^{n_r}(\alpha)=\beta$. According to Proposition~\ref{finitely many minimal elements}, the set $E$ has a finite set $M$ of minimal elements with respect to the ordering $\prec$ on $\N^r$. Now, let 
$$L:=\{(n_1,\dots,n_r)\in \N^r\text{ : }\Phi_1^{n_1}\cdots \Phi_r^{n_r}(\beta)=\beta\}.$$
Let $H$ be the subgroup of $\Z^r$ generated by $L$. We claim that $L=H\cap \N^r$. Indeed, we see from the definition of $L$ that it is closed under addition, and moreover, for any $\beta,\gamma\in L$ such that $\beta \prec \gamma$, then also $\gamma-\beta\in L$. Now it is immediate to see that
$E=\bigcup_{\tau\in M}(\tau + L)$, as desired.
\end{proof}

The following result is an immediate corollary of Proposition~\ref{zero-dimensional} and Theorem~\ref{T:F}, which shows that Question~\ref{semigroup semiabelian} has positive answer assuming $G$ is a \emph{simple} semiabelian variety (i.e. $G$ has no connected algebraic subgroups other than itself, and the trivial algebraic group).
\begin{cor}
\label{the case of simple semiabelian varieties}
Let $G$ be a simple semiabelian variety defined over $\bC$, let $\Phi_1,\dots,\Phi_r$ be any set of commuting endomorphisms of $G$, let $\alpha\in G(\bC)$, and let $V$ be any subvariety of $G$. Then the set of tuples $(n_1,\dots,n_r)\in\N^r$ for which $\Phi_1^{n_1}\cdots \Phi_r^{n_r}(\alpha)\in V$ belongs to the class $\CC$.
\end{cor}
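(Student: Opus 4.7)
The plan is to use Theorem~\ref{T:F} to force the relevant orbit-intersection to be a finite set of points, and then to reduce the corollary to a finite union of applications of Proposition~\ref{zero-dimensional}. The only step that requires some thought is finding a finitely generated subgroup $\Gamma\subset G(\bC)$ that contains the orbit $\OO_S(\alpha)$, and this is where simplicity of $G$ is used in an essential way.

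First I would dispose of the trivial case $V=G$, for which $E=\N^r\in\CC$ tautologically. So assume $V\subsetneq G$. Because $G$ is simple, it is either $\bG_m$ or a simple abelian variety, so its endomorphism algebra $\End(G)\tensor_\Z\Q$ is a finite-dimensional $\Q$-division algebra and $\End(G)$ itself is finitely generated as a $\Z$-module (it sits inside an order of this division algebra). Let $R\subset\End(G)$ be the subring generated by $\Phi_1,\dots,\Phi_r$; since the $\Phi_i$ commute and $R$ is a sub-$\Z$-module of $\End(G)$, it is itself finitely generated as a $\Z$-module, say by $r_1,\dots,r_m$. Then
\[
\Gamma := \Z\, r_1(\alpha) + \dots + \Z\, r_m(\alpha)
\]
is a finitely generated subgroup of $G(\bC)$, and by construction $\OO_S(\alpha)\subset R\cdot\alpha\subset\Gamma$ since each $\Phi_1^{n_1}\cdots\Phi_r^{n_r}$ lies in $R$.

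Next I would apply Theorem~\ref{T:F} to this $\Gamma$ and to $V$: the intersection $V(\bC)\cap\Gamma$ is a union of finitely many cosets $c+\Gamma_0$ of subgroups $\Gamma_0\subset\Gamma$. The Zariski closure of such a coset is contained in $V$, and the Zariski closure of the subgroup $\Gamma_0$ (being an algebraic subgroup of the simple semiabelian variety $G$) has connected component equal either to $\{0\}$ or to $G$. The latter would force $V=G$, contradicting our assumption, so each $\Gamma_0$ lies in a finite subgroup of $G$ and is therefore finite. Hence $V(\bC)\cap\Gamma$ is a finite set, and consequently $V(\bC)\cap\OO_S(\alpha)$ is a finite set of points $\{P_1,\dots,P_s\}$.

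Finally, for each $j$ let $E_j:=\{(n_1,\dots,n_r)\in\N^r\col \Phi_1^{n_1}\cdots\Phi_r^{n_r}(\alpha)=P_j\}$, so that $E=\bigcup_{j=1}^s E_j$. Proposition~\ref{zero-dimensional}, applied to the zero-dimensional subvariety $\{P_j\}$, shows each $E_j\in\CC$; since $\CC$ is manifestly closed under finite unions (by the very form of Definition~\ref{the class}), we conclude $E\in\CC$. The only conceptually nontrivial step is the finite generation of $\Gamma$, which is exactly where simplicity of $G$ enters via the finite-dimensionality of $\End(G)\tensor\Q$; everything else is a direct marriage of the classical Mordell--Lang theorem with the zero-dimensional result already established in this section.
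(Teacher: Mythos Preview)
Your proof is correct and follows the same overall strategy as the paper: invoke Theorem~\ref{T:F} on a finitely generated group containing the orbit, use simplicity to force the intersection to be finite, and conclude via Proposition~\ref{zero-dimensional}.

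One small point worth noting: you identify the construction of a finitely generated $\Gamma$ as ``where simplicity of $G$ is used in an essential way,'' but in fact the paper proves this for \emph{any} semiabelian variety in Lemma~\ref{simple lemma}, via the characteristic polynomial of the induced action on $H_1(G,\Z)$. So simplicity is not needed for that step; its only essential role is exactly where you also use it --- to rule out positive-dimensional connected algebraic subgroups inside a proper $V$. Your argument via finite generation of $\End(G)$ as a $\Z$-module is perfectly valid for simple $G$, just slightly less general than what the paper records.
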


We will use the following easy lemma both in the proof of Corollary~\ref{the case of simple semiabelian varieties}, and also in the proof of our main result Theorem~\ref{main}.

\begin{lemma}
\label{simple lemma}
Let $G$ be a semiabelian variety defined over $\bC$, let $S$ be a commutative semigroup of $\End(G)$ generated by the endomorphisms $\Phi_1,\dots,\Phi_r$ of $G$, and let $\alpha\in G(\bC)$. Then the orbit $\OO_S(\alpha)$ spans a finitely generated subgroup $\Gamma$ of $G(\bC)$.
\end{lemma}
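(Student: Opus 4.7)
The plan is to show that each algebraic group endomorphism of a semiabelian variety satisfies a monic polynomial with integer coefficients, and then use this to bound the exponent tuples that can appear when expressing orbit elements via a finite generating set.

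First, I would establish the following auxiliary fact: for every $\Phi\in\End(G)$ there exists a monic polynomial $f_{\Phi}\in\Z[X]$ with $f_{\Phi}(\Phi)=0$ in $\End(G)$. Write $1\to T\to G\to A\to 1$ with $T$ a torus and $A$ an abelian variety. Any $\Phi\in\End(G)$ carries $T$ into $T$ (since $T$ is the maximal affine connected subgroup, hence characteristic) and induces $\Phi_A\in\End(A)$ on the quotient. The resulting map $\End(G)\to\End(T)\times\End(A)$ is injective: any $\Phi$ killing $T$ and inducing zero on $A$ would factor through a homomorphism $A\to T$, which must be trivial since an abelian variety admits no nonconstant morphisms to an affine group. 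Next, since $\End(T)\cong M_k(\Z)$ and $\End(A)\hookrightarrow M_{2g}(\Z)$ (via the action on a Tate module, say), both $\Phi|_T$ and $\Phi_A$ satisfy their characteristic polynomials, which are monic with integer coefficients. Taking $f_{\Phi}$ to be the product of these two characteristic polynomials and using the injectivity just established, we obtain $f_{\Phi}(\Phi)=0$ in $\End(G)$.

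Now for each $i=1,\dots,r$, fix such a monic relation $f_i(X)=X^{d_i}-\sum_{j=0}^{d_i-1}a_{ij}X^j\in\Z[X]$, so that $\Phi_i^{d_i}=\sum_{j=0}^{d_i-1}a_{ij}\Phi_i^j$ inside $\End(G)$. Let $\Gamma$ be the subgroup of $G(\bC)$ generated by the finite set
\[
F := \bigl\{\Phi_1^{m_1}\cdots\Phi_r^{m_r}(\alpha) \col 0\le m_i<d_i \text{ for each } i\bigr\}.
\]
I claim $\Phi_1^{n_1}\cdots\Phi_r^{n_r}(\alpha)\in\Gamma$ for every $(n_1,\dots,n_r)\in\N^r$, which immediately yields $\OO_S(\alpha)\subset\Gamma$ and hence that the subgroup spanned by $\OO_S(\alpha)$ is finitely generated.

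The claim is proved by induction on $n_1+\cdots+n_r$. If $n_i<d_i$ for all $i$, the element lies in $F\subset\Gamma$. Otherwise, pick an index $i$ with $n_i\ge d_i$; using that the $\Phi_j$ commute, apply the relation for $\Phi_i$ to obtain
\[
\Phi_1^{n_1}\cdots\Phi_r^{n_r}(\alpha)=\sum_{j=0}^{d_i-1}a_{ij}\,\Phi_1^{n_1}\cdots\Phi_i^{n_i-d_i+j}\cdots\Phi_r^{n_r}(\alpha),
\]
a $\Z$-linear combination of iterates with strictly smaller total exponent. By the inductive hypothesis each summand lies in $\Gamma$, and so does the sum since $\Gamma$ is a group.

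The only nontrivial point is the existence of the monic integer annihilating polynomial for endomorphisms of an arbitrary semiabelian variety; once one has the characteristic-polynomial argument on the torus and the abelian quotient together with the injectivity of $\End(G)\to\End(T)\times\End(A)$, the rest is a clean reduction argument. After that, the conclusion is purely formal.
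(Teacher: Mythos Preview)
Your proof is correct and follows essentially the same approach as the paper: establish that each $\Phi_i$ satisfies a monic polynomial over $\Z$ in $\End(G)$, then conclude that the orbit sits in the subgroup generated by the finitely many $\Phi_1^{m_1}\cdots\Phi_r^{m_r}(\alpha)$ with $0\le m_i<d_i$. The only difference is in how the integral dependence is obtained---the paper takes the characteristic polynomial of the induced action on $H_1(G,\Z)$ directly, whereas you split into the torus and abelian-variety pieces and multiply the two characteristic polynomials (using the injectivity of $\End(G)\to\End(T)\times\End(A)$); both routes are valid and yield the same conclusion.
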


\begin{proof}
  Each endomorphism $\Psi$ of $G$ satisfies a monic equation of degree
  $2k + l$ over $\Z$ (seen as a subring of $\End(G)$), where $1 \to
  \bG_m^k \to G \to A \to 1$ is exact and $l = \dim A$.  This follows
  by taking the characteristic polynomial of the action $\Psi$ induces
  on $H_1(G, \Z)$, which has dimension equal to $2k + l$ (see
  \cite[I.10]{Milne}, for example).  Hence $\OO_S(\alpha)$ is
  contained in the subgroup $\Gamma\subset G(\bC)$ spanned by all
  $\Phi_1^{n_1}\dots\Phi_r^{n_r}(\alpha)$ where $0\le n_i<2k + l$ for
  each $i=1,\dots,r$.
\end{proof}

\begin{proof}[Proof of Corollary~\ref{the case of simple semiabelian varieties}]
It is clear that it suffices to prove Corollary~\ref{the case of simple semiabelian varieties} assuming $V$ is irreducible. Furthermore, we may replace $V$ by the Zariski closure of $V(\bC)\cap\OO_S(\alpha)$.

If there are only finitely many distinct points in the intersection of $V$ with the orbit $\OO_S(\alpha)$ under the semigroup $S$ generated by $\Phi_1,\dots,\Phi_r$, then we are done by applying Proposition~\ref{zero-dimensional} to each of the finitely many points in $V(\bC)\cap\OO_S(\alpha)$.

Now, assume $V(\bC)\cap\OO_S(\alpha)$ is infinite. Then also $V(\bC)\cap\Gamma$ is infinite, where $\Gamma$ is the subgroup of $G(\bC)$ generated by $\OO_S(\alpha)$ (by Lemma~\ref{simple lemma}, we know that $\Gamma$ is a finitely generated subgroup). Applying Theorem~\ref{T:F}, we conclude that the irreducible subvariety $V$ is the Zariski closure of a coset of an infinite subgroup of $\Gamma$; therefore, we conclude that $V$ is a translate of a connected algebraic subgroup of $G$. However, since $G$ is a simple semiabelian variety, and since $V$ contains infinitely many points (because $V(\bC)\cap\OO_S(\alpha)$ is infinite), we conclude that $V=G$. Then the conclusion of Corollary~\ref{the case of simple semiabelian varieties} is immediate.  
\end{proof}

Corollary~\ref{the case of simple semiabelian varieties} immediately yields Theorem~\ref{main}(c) for $j=0,1$ (note that each one-dimensional semiabelian variety is simple).
\begin{cor}
\label{j=0,1}
Let $A$ be a simple semiabelian variety, let $j\in\{0,1\}$, and let $G=A^j$. Let $\Phi_1,\dots,\Phi_r$ be any set of commuting endomorphisms of $G$, let $\alpha\in G(\bC)$, and let $V$ be any subvariety of $G$. Then the set of tuples $(n_1,\dots,n_r)\in\N^r$ for which $\Phi_1^{n_1}\cdots \Phi_r^{n_r}(\alpha)\in V$ belongs to the class $\CC$.
\end{cor}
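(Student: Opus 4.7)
The statement is essentially a repackaging of Corollary~\ref{the case of simple semiabelian varieties} together with a trivial degenerate case, so my plan is simply to reduce Corollary~\ref{j=0,1} to the prior corollary by handling the two values of $j$ separately.

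For $j=0$, the variety $G=A^0$ is a single point (the trivial algebraic group). The orbit $\OO_S(\alpha)$ therefore consists of that single point, and any subvariety $V\subset G$ is either empty or all of $G$. In the first case the set of tuples $(n_1,\dots,n_r)$ is empty; in the second it is all of $\N^r = \N^r \cap \Z^r$. Both sets trivially belong to $\CC$.

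For $j=1$, we have $G=A$ itself, which by hypothesis is a simple semiabelian variety. Thus Corollary~\ref{j=0,1} in this case is literally the statement of Corollary~\ref{the case of simple semiabelian varieties} applied to $G = A$, with the same data $\Phi_1,\dots,\Phi_r$, $\alpha$, and $V$. No further work is required.

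There is no real obstacle here; the only thing worth remarking is \emph{why} this corollary is singled out, namely that it covers the easy ``low-dimensional'' fragments of Theorem~\ref{main}(c), leaving only the substantially harder case $j=2$ (where $G=A^2$ is no longer simple and genuine linear-algebra arguments over $\bC_p$ are required) to be handled later in the paper.
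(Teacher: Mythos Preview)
Your proposal is correct and matches the paper's approach exactly: the paper does not give a separate proof but simply states that Corollary~\ref{the case of simple semiabelian varieties} ``immediately yields'' this result (noting that a one-dimensional semiabelian variety is simple). Your case split $j=0$ (trivial group) and $j=1$ (apply Corollary~\ref{the case of simple semiabelian varieties} directly) is precisely the content behind that remark.
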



\section{Some reductions}
\label{reductions}
In this section, we will show that it suffices to prove
Theorem~\ref{main} in the case where the $\Phi_i$ are all finite maps and
where $\alpha$ has been replaced by a suitable multiple $m \alpha$.
This will allow us in Section~\ref{p-adic section} to use $p$-adic logarithms and exponentials to
translate our problem into the language of linear algebra.  

\begin{lemma}
\label{reduction to finite maps}
It suffices to prove Theorem~\ref{main} if each
$\Phi_i$ is finite.
\end{lemma}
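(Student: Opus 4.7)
The strategy is to reduce the general case to the finite-endomorphism one by restricting to the stable image of the semigroup $S$ generated by the $\Phi_i$, proceeding by induction on $r$. The base case $r=0$ is trivial; in the inductive step, $E$ will be expressed as a finite union of sets each of which lies in $\CC$ by either the assumed finite-endomorphism case of Theorem~\ref{main} or by the inductive hypothesis applied to fewer than $r$ commuting endomorphisms.

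The key construction is the stable image. Since the $\Phi_i$ commute, $(\Phi_1\cdots\Phi_r)^n=\Phi_1^n\cdots\Phi_r^n$, and the descending chain $G^{(n)}:=\Phi_1^n\cdots\Phi_r^n(G)$ of connected algebraic subgroups of $G$ must stabilize at some $G^\infty:=G^{(N)}$. A short computation using commutativity gives $\Phi_i(G^\infty)\subseteq G^\infty$ for every $i$, and together with the surjectivity of $\Phi_1\cdots\Phi_r$ on $G^\infty$ (immediate from $G^\infty=G^{(N+1)}$) this yields $\Phi_i(G^\infty)=G^\infty$ for each $i$. A surjective algebraic group endomorphism of a connected algebraic group has finite kernel by a dimension count, so each $\Phi_i|_{G^\infty}$ is a finite endomorphism of $G^\infty$. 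Moreover, $\Phi_1^{n_1}\cdots\Phi_r^{n_r}(G)\subseteq G^\infty$ whenever every $n_i\geq N$.

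Next, I would partition $E$ into the slice where $n_i\geq N$ for all $i$ and the finitely many slices where $n_i=a_i$ is fixed at some value less than $N$ for $i$ in some nonempty $I\subseteq\{1,\dots,r\}$ and $n_j\geq N$ for $j\notin I$. On each slice with $I\neq\emptyset$, fixing the $a_i$ and replacing $\alpha$ by $\alpha_I:=\prod_{i\in I}\Phi_i^{a_i}(\alpha)$ converts the problem into one involving only the $r-|I|$ commuting endomorphisms $(\Phi_j)_{j\notin I}$ on the same $G$ and $V$ (so hypotheses (a)--(c) still hold), to which the inductive hypothesis applies; lifting the resulting index set back to $\N^r$ by adjoining the fixed coordinates preserves membership in $\CC$. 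On the all-big slice, the substitution $n_i=N+m_i$ converts the problem into one on $G^\infty$ with starting point $\Phi_1^N\cdots\Phi_r^N(\alpha)\in G^\infty$, subvariety $V\cap G^\infty$, and the finite endomorphisms $\Phi_i|_{G^\infty}$, so the assumed finite-endomorphism case of Theorem~\ref{main} yields a set in $\CC$, whose translate by $(N,\dots,N)$ remains in $\CC$. Combining, $E$ is a finite union of sets in $\CC$, hence in $\CC$.

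The principal obstacle is verifying that one of the hypotheses (a)--(c) of Theorem~\ref{main} is inherited by $(G^\infty, V\cap G^\infty, \Phi_i|_{G^\infty})$, so that the finite-endomorphism case may actually be invoked on the all-big slice. For (a), the restriction of a diagonalizable linear operator to an invariant subspace is diagonalizable, so each $\Phi_i|_{G^\infty}$ has diagonalizable Jacobian at $0$. For (b), either $V\subseteq G^\infty$, in which case $V$ is still a connected one-dimensional algebraic subgroup of $G^\infty$, or $V\cap G^\infty$ is zero-dimensional and Proposition~\ref{zero-dimensional} handles the all-big slice directly, bypassing the finite-endomorphism hypothesis entirely. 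For (c), since $A$ is simple and one-dimensional, every connected algebraic subgroup of $A^j$ (with $j\leq 2$) is isomorphic to $A^{j'}$ for some $j'\leq j$, so (c) persists on $G^\infty$ after an identification.
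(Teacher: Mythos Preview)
Your proof is correct and follows essentially the same strategy as the paper's: induction on $r$, passage to a stable-image subgroup on which every $\Phi_i$ becomes an isogeny, and a decomposition of $\N^r$ into an all-large box (handled on the stable image by the assumed finite-endomorphism case) together with boundary pieces (handled by the inductive hypothesis on fewer generators). The paper's variant constructs the stable image as $\Phi_1^{N_1}\cdots\Phi_r^{N_r}(G)$, choosing each $N_i$ so that $\Phi_i$ is finite on $\Phi_i^{N_i}(G)$, and it covers the complement of the big box by single-coordinate slices via Proposition~\ref{reduction of r} rather than by your partition indexed by subsets $I$; these differences are cosmetic.

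One small imprecision in your treatment of hypothesis (c): the assertion that every connected algebraic subgroup of $A^j$ is isomorphic to $A^{j'}$ for the \emph{same} $A$ is not true in general. When $A$ is an elliptic curve with complex multiplication by an order of class number greater than one, a one-dimensional abelian subvariety of $A^2$ is isogenous to $A$ but need not be isomorphic to it (e.g.\ the image of $P\mapsto(aP,bP)$ for generators $a,b$ of a non-principal ideal). Your conclusion that (c) persists on $G^\infty$ is nevertheless correct, and the paper's justification is the right one: any proper connected algebraic subgroup of $A^2$ has dimension at most one, hence is itself a one-dimensional semiabelian variety $A'$, and so is of the form $(A')^1$---which is all hypothesis (c) requires.
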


\begin{proof}[Proof of Lemma~\ref{reduction to finite maps}.]
We proceed by induction on the number $r$ of generators of the semigroup $S$; the case $r=0$ is trivial since then the semigroup $S$ is trivial.

Assume $r=1$, i.e. $S=\langle\Phi_1\rangle$. Because
$\Phi_1^n(G)\subset \Phi_1^{n-1}(G)$ for each $n\in\N$, and because there is no
infinite descending chain of semiabelian varieties, we conclude that there
exists $N_1\in\N$ such that $\Phi_1$ is a finite endomorphism of the
semiabelian variety $G_1:=\Phi_1^{N_1}(G)$. Hence, replacing $G$ by $G_1$,
replacing $V$ by $V\cap G_1$, and replacing $\alpha$ by $\Phi_1^{N_1}(\alpha)$
we are done.

In the above argument, note that if we are in the case of Theorem~\ref{main}(c) and $G=A^2$, where $A$ is a one-dimensional semiabelian variety, in the above descending chain of semiabelian varieties one may only find the trivial group, or the group $G$ itself, or some one-dimensional semiabelian variety. Indeed, $\dim(G)=2$, and thus, each proper semiabelian subvariety of $G$ is one-dimensional. Therefore, in the argument from the above paragraph, if $G$ satisfies the hypothesis of Theorem~\ref{main}(c), then also each semiabelian subvariety $\Phi_1^i(G)$ would also satisfy the hypothesis of Theorem~\ref{main}(c).

Assume that we proved Lemma~\ref{reduction to finite maps} for all semigroups
$S$ generated by fewer than $r$ endomorphisms. Reasoning as before, for each
$i=1,\dots,r$ we find $N_i\in\N$ such that $\Phi_i$ is a finite endomorphism
of the semiabelian variety $G_i:=\Phi_i^{N_i}(G)$; let
$H:=\Phi_1^{N_1}\cdots\Phi_r^{N_r}(G)$. Then Proposition~\ref{reduction of r} applied to the element $(N_1,\dots,N_r)$ of $\N^r$ allows us to apply the inductive hypothesis. More precisely, for each $i=1,\dots,r$, we let $S_i$ be the
commutative semigroup generated by all $\Phi_j$ for $j\ne i$; then
\begin{eqnarray*}
\OO_S(\alpha) & = & \OO_S(\Phi_1^{N_1}\cdots\Phi_r^{N_r}(\alpha))\\
& \bigcup & \left(\OO_{S_1}(\alpha)\cup\OO_{S_1}(\Phi_1(\alpha))\cup \dots
  \cup\OO_{S_1}(\Phi_1^{N_1-1}(\alpha))\right)\\
& \bigcup & \left(\OO_{S_2}(\alpha)\cup\OO_{S_2}(\Phi_2(\alpha))\cup \dots
  \cup\OO_{S_2}(\Phi_2^{N_2-1}(\alpha))\right)\\
& \bigcup & \cdots \cdots \cdots \cdots \cdots \cdots \cdots \cdots \cdots
  \cdots \cdots \cdots \cdots \\
& \bigcup & \left(\OO_{S_r}(\alpha)\cup\OO_{S_r}(\Phi_r(\alpha))\cup \dots
  \cup\OO_{S_r}(\Phi_r^{N_r-1}(\alpha))\right).
\end{eqnarray*}
Each $S_i$ is generated by $(r-1)$ endomorphisms of $G$, and thus, by the
induction hypothesis, Lemma~\ref{reduction to finite maps} holds for the intersection of the subvariety $V$ with each
$S_i$-orbit. On the other hand, each $\Phi_i$ is finite on $H$, and
$\Phi_1^{N_1}\cdots\Phi_r^{N_r}(\alpha)\in H(\bC)$; so, replacing $G$ by $H$,
and replacing $V$ by $V\cap H$, we are done.
\end{proof}

From now on in the proof of Theorem~\ref{main} (and thus in this Section), we assume each $\Phi_i$ is finite on $G$.
Clearly, it suffices to prove Theorem~\ref{main} for irreducible subvarieties $V$.

Next we
prove another important reduction.
\begin{lemma}
\label{replace by a multiple}
It suffices to prove
Theorem~\ref{main} for the $S$-orbit of $m\alpha$, and
for the variety $mV$, where $m$ is any positive integer.
\end{lemma}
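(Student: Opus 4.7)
The plan is to use the multiplication-by-$m$ isogeny $[m]\colon G\to G$, which is a finite algebraic group endomorphism commuting with every $\Phi_i$. For each $n=(n_1,\dots,n_r)\in\N^r$ one has
\[ \Phi_1^{n_1}\cdots\Phi_r^{n_r}(m\alpha) \;=\; m\cdot \Phi_1^{n_1}\cdots\Phi_r^{n_r}(\alpha), \]
so $\Phi^n(m\alpha)\in mV$ if and only if $\Phi^n(\alpha)\in [m]^{-1}(mV)$. Because $[m]$ is surjective with finite kernel $G[m]$, the preimage $[m]^{-1}(mV)$ equals the finite set-theoretic union $V+G[m]=V_0\cup V_1\cup\dots\cup V_s$ of translates of $V$ by representatives $\tau_0=0,\tau_1,\dots,\tau_s$ of the cosets of $\mathrm{Stab}_{G[m]}(V)$ in $G[m]$.

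Let $E$ denote the answer set for $(\alpha,V)$, let $E_j:=\{n : \Phi^n(\alpha)\in V_j\}$, and let $E':=\{n : \Phi^n(m\alpha)\in mV\}$. The preceding paragraph gives $E'=E_0\cup E_1\cup\dots\cup E_s$ with $E=E_0$. The hypothesis of the lemma supplies $E'\in\CC$, and the task is to deduce $E_0\in\CC$. To this end I would, for each $n\in E'$, write $\Phi^n(\alpha)=v_n+\tau'_n$ with $v_n\in V$ and $\tau'_n\in G[m]$, so that membership of $\Phi^n(\alpha)$ in a particular $V_j$ is controlled by the class of $\tau'_n$ in $G[m]/\mathrm{Stab}_{G[m]}(V)$. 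Writing $E'$ as a finite union of cosets $u_i+(H_i\cap\N^r)$ from $E'\in\CC$ and parameterizing $n=u_i+h$, the identity
\[ \Phi^n(\alpha) \;=\; \Phi^h(v_{u_i})+\Phi^h(\tau'_{u_i}) \]
shows that the torsion component $\Phi^h(\tau'_{u_i})$ lies in the finite $S$-invariant group $G[m]$ (since each $\Phi_i$ preserves $G[m]$) and takes only finitely many values. By Proposition~\ref{zero-dimensional} applied to the induced $S$-action on $G[m]$, the set of $h$ for which $\Phi^h(\tau'_{u_i})$ equals any prescribed torsion value lies in $\CC$. Intersecting with the original coset via Proposition~\ref{intersection of cosets yields cosets} and tracking the branch $V_j$ via the non-torsion part $\Phi^h(v_{u_i})$ isolates $E_0=E$ inside $E'$.

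The main obstacle will be disentangling the non-torsion contribution $\Phi^h(v_{u_i})$ from the torsion contribution $\Phi^h(\tau'_{u_i})$ in the displayed identity: the branch index $j(n)$ depends on both, but only the second is a priori finitely valued. The resolution comes from the finiteness and $S$-invariance of $G[m]$, which forces the torsion contribution to factor through a finite dynamical system governed by Proposition~\ref{zero-dimensional}, after which Propositions~\ref{cosets of subgroups yield subsemigroups} and~\ref{intersection of cosets yields cosets} package the non-torsion contribution and the intersection constraints into $\CC$, yielding $E\in\CC$ and completing the reduction.
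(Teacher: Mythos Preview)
There is a genuine gap in your argument at the very step you flag as ``the main obstacle''. Your displayed identity
\[
\Phi^n(\alpha) \;=\; \Phi^h(v_{u_i}) \;+\; \Phi^h(\tau'_{u_i})
\]
is \emph{not} a decomposition of $\Phi^n(\alpha)$ into a point of $V$ plus an element of $G[m]$: although $v_{u_i}\in V$, the image $\Phi^h(v_{u_i})$ has no reason to lie in $V$ (the $\Phi_i$ need not preserve $V$). Hence knowing the torsion contribution $\Phi^h(\tau'_{u_i})$ exactly --- which, as you say, Proposition~\ref{zero-dimensional} does control --- tells you nothing about which $V_j$ contains $\Phi^n(\alpha)$. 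Concretely, to decide whether $n\in E_0$ you still need to determine whether $\Phi^h(v_{u_i})\in V-\Phi^h(\tau'_{u_i})$, and this is precisely an instance of the original problem (new starting point $v_{u_i}$, new target a translate of $V$). Your final paragraph asserts that Propositions~\ref{cosets of subgroups yield subsemigroups} and~\ref{intersection of cosets yields cosets} ``package the non-torsion contribution'', but those propositions only manipulate sets already known to lie in~$\CC$; they provide no mechanism to show the relevant set is in~$\CC$ in the first place. The circularity is not broken.

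The paper's proof avoids this circularity by a completely different route: an induction on $\dim(V)+r$. Given $E\subset E_m$ with $E_m\in\CC$, fix a coset $\beta+(H\cap\N^r)\subset E_m$, set $\alpha_1=\Phi^\beta(\alpha)$, and let $V_1$ be the Zariski closure of the orbit of $\alpha_1$ under the subsemigroup $S_1$ corresponding to $H\cap\N^r$. If $V\not\subset V_1$ one replaces $V$ by $V\cap V_1$, dropping the dimension. If $V\subset V_1$, the key point is that $m\cdot\OO_{S_1}(\alpha_1)\subset mV$ forces $\dim V_1=\dim V$, so $V$ is an irreducible component of $V_1$; since each $\sigma\in S_1$ (being finite) permutes the top-dimensional components of $V_1$, the variety $V$ is preperiodic under every $\sigma$. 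One then splits into the cases where $V$ is periodic under all of $S_1$ (handled by Proposition~\ref{important step semigroup}) or not periodic under some $\sigma$ (handled by reducing $r$ via Proposition~\ref{reduction of r}). This geometric/inductive argument is what replaces the unavailable direct control over $\Phi^h(v_{u_i})$.
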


\begin{proof}
We will prove an (apparently) even stronger statement; we will show that for any coset of a subsemigroup of the form $\gamma + F\cap \N^r$ (where $\gamma\in\N^r$ and $F\subset\Z^r$ is a subgroup), the set of tuples $(n_1,\dots,n_r)\in \left(\gamma+ F\cap \N^r\right)$ for which $\Phi_1^{n_1}\cdots \Phi_r^{n_r}(\alpha)\in V(\bC)$ belongs to the class $\CC$. On the other hand, this apparent stronger statement is equivalent with the original one. Indeed, if we know that the set of tuples $(n_1,\dots,n_r)\in \N^r$ for which $\Phi_1^{n_1}\cdots \Phi_r^{n_r}(\alpha)\in V(\bC)$ belongs to the class $\CC$, then Proposition~\ref{intersection of cosets yields cosets} yields that also when we restrict to those tuples which are in $\gamma +F\cap\N^r$, we also get a set from the class $\CC$. So, from now on, we let $S$ be the set of all endomorphisms
$$\{\Phi_1^{n_1}\cdots \Phi_r^{n_r}\text{ : }(n_1,\dots,n_r)\in \left(\gamma + F\cap \N^r\right)\}.$$

We proceed the proof of Lemma~\ref{replace by a multiple} by induction on $\dim(V)+r$; the case $\dim(V)=r=0$ is obvious. Note also that if $\dim(V)=0$, then Theorem~\ref{main} was already proved in higher generality in Proposition~\ref{zero-dimensional}.   
On the other hand, if $r=0$, then the semigroup $S$ is trivial, and so the statement of Theorem~\ref{main} is obvious without any conditions. Furthermore, if $r=1$, then Theorem~\ref{main} is true in more generality, as shown in \cite{p-adic}.

Now, fix a positive integer $m$ and suppose that
Theorem~\ref{main} is true for the intersection of the
$S$-orbit of $m \alpha$ with the variety $mV$. 
So, we know that there exists a set $E_m\in \CC$ which contains all tuples $(n_1,\dots,n_r)\in \left(\gamma + F\cap \N^r\right)$ such that 
$$\Phi_1^{n_1}\cdots \Phi_r^{n_r}(m\alpha)\in mV.$$ 
For every $\Psi\in S$ such that $\Psi(\alpha)\in V$, we have
$\Psi(m\alpha)\in mV$ (because $\Psi$ is an algebraic group endomorphism). Therefore, if $E$ is the set of tuples $(n_1,\dots,n_r)\in \left(\gamma + F\cap \N^r\right)$ for which 
$$\Phi_1^{n_1}\cdots \Phi_r^{n_r}(\alpha)\in V,$$
then $E\subset E_m$. Let  $\beta + (H\cap \N^r)$, where $\beta\in\N^r$ and $H$ is a subgroup of $\Z^r$, be one of the cosets of subsemigroups contained in $E_m$; by definition of the class $\CC$, we know that $E_m$ is a finite union of such cosets of subsemigroups. It suffices to show that $E\cap \left(\beta+ (H\cap \N^r)\right)\in \CC$.

For $\beta$ and $H$ as above, we let 
$$S_1:=\{\Phi^{n_1}\cdots \Phi_r^{n_r}\text{ : }(n_1,\dots,n_r)\in  H\cap \N^r\},$$
and we let $\alpha_1:=\Phi_1^{b_1}\cdots \Phi_r^{b_r}(\alpha)$, where $\beta:=(b_1,\dots,b_r)$. Clearly, it suffices to show that the set $E_{\beta}$ of tuples $(n_1,\dots,n_r)\in H\cap \N^r$ for which
$$\Phi_1^{n_1}\cdots \Phi_r^{n_r}(\alpha_1)\in V(\bC)$$
belongs to the class $\CC$.

We let $V_1$ be the Zariski closure of $\OO_{S_1}(\alpha_1)$. If $V$ is not contained in $V_1$, then $V\cap V_1$ has smaller dimension than $V$ (since $V$ is irreducible), and because
$$V(\bC)\cap \OO_{S_1}(\alpha_1)= (V\cap V_1)(\bC)\cap \OO_{S_1}(\alpha_1),$$
we are done by the induction hypothesis (also note that if we are in part (b) of Theorem~\ref{main}, then $V\cap V_1$ is zero-dimensional, and thus the conclusion follows as shown in Proposition~\ref{zero-dimensional}). 

Assume now that $V\subset V_1$. Since, by construction, $m\cdot \OO_{S_1}(\alpha_1)\subset mV$ (because each $\Phi_i$ is an algebraic group endomorphism), we have that $mV_1\subset mV$. Therefore, $\dim(V)=\dim(V_1)$, and moreover $V$ is an irreducible component of $V_1$. For each $\sigma\in S_1$, we have $\sigma(V_1)\subset V_1$ since $\sigma(\OO_{S_1}(\alpha_1))\subset \OO_{S_1}(\alpha_1)$. Furthermore, since each $\sigma$ is a finite map, we conclude that $\sigma$ maps each irreducible component of $V_1$ of maximal dimension into another such component; hence $V$ is preperiodic under $\sigma$. 
There are two cases, which are covered in the following two propositions.

\begin{prop}
\label{V is not periodic}
If there exists $\sigma\in S_1$ such that $V$ is not periodic under $\sigma$, then $E_{\beta}\in \CC$.
\end{prop}

\begin{proof}
First of all, at the expense of replacing $\sigma$ by an iterate, we may assume that $V\not\subset\sigma(V_1)$ (since $\sigma$ maps the finite set of  irreducible components of $V_1$ of maximal dimension into itself, but no power of $\sigma$ preserves $V$).

We let $(k_1,\dots,k_r)\in \N^r$ such that $\sigma = \Phi_1^{k_1}\cdots \Phi_r^{k_r}$. We apply Proposition~\ref{reduction of r} to the element $(k_1,\dots,k_r)$ of the set $ H\cap \N^r$ and construct the subsets:
$$H_{(k_1,\dots,k_r)}:=\{\gamma\in H\cap \N^r\text{ : }(k_1,\dots,k_r)\prec \gamma\}$$
and 
$$H_{i,j}:=\{(\ell_1,\dots,\ell_r)\in  H\cap \N^r\text{ : }\ell_i=j\},$$
for $i=1,\dots,r$ and for $0\le j<k_i$. Then, as in Proposition~\ref{reduction of r}, we have 
$$H\cap\N^r=H_{(k_1,\dots,k_r)}\bigcup\left(\bigcup_{\substack{1\le i \le r\\ 0\le j \le k_i-1}}H_{i,j}\right).$$
It suffices to show that $E_{\beta}\cap H_{(k_1,\dots,k_r)}\in \CC$ and also that each $E_{\beta}\cap H_{i,j}\in \CC$.

Propositions \ref{reduction of r} and \ref{the cosets class is closed under projections} show that the projection of each $H_{i,j}$ on the $(r-1)$-st coordinates of $\N^r$ other than the $i$-th coordinate is also in the class $\CC$. Therefore the intersection $E_{\beta}\cap H_{i,j}$ belongs to the class $\CC$ by the inductive hypothesis, since we reduced the original problem to the intersection between the subvariety $V$ with the orbit of a point under a finitely generated commuting semigroup based on $(r-1)$ endomorphisms of $G$.

Now, for each $(\ell_1,\dots,\ell_r)\in H_{(k_1,\dots,k_r)}\cap E_{\beta}$, we actually have that
$$\Phi_1^{\ell_1}\cdots \Phi_r^{\ell_r}(\alpha_1)\in (V\cap \sigma(V_1))(\bC),$$
because $(k_1,\dots,k_r)\prec (\ell_1,\dots,\ell_r)$ and $\sigma=\Phi_1^{k_1}\cdots \Phi_r^{k_r}$.
Since $V$ is an irreducible variety and $V\not\subset \sigma(V_1)$, we conclude that $V\cap \sigma(V_1)$ has smaller dimension than $\dim(V)$. Thus $E_{\beta}\cap H_{(k_1,\dots,k_r)}\in \CC$ by our induction hypothesis, since we reduced the original problem to the intersection between the orbit of a point under a finitely generated commuting semigroup based on $r$ endomorphisms of $G$ with a subvariety of dimension less than $\dim(V)$. 
\end{proof}

\begin{prop}
\label{V is periodic}
If for each $\sigma\in S_1$, the variety $V$ is periodic under $\sigma$, then $E_{\beta}\in \CC$.
\end{prop}

\begin{proof}
According to Proposition~\ref{finitely generated semigroups}, there are finitely many generators $\{\tau_i\}_{1\le i\le s}$ for $H\cap \N^r$. For each $i=1,\dots,s$, we let $\sigma_i\in S_1$ be the element of $S_1$ corresponding to $\tau_i$; more precisely, if $\tau_i:=(k_1,\dots,k_r)$, then we let $\sigma_i:=\Phi_1^{k_1}\cdots \Phi_r^{k_r}$.

Using the hypothesis of Proposition~\ref{V is periodic}, for each $i=1,\dots,s$, we let $N_i\in\N$ such that $\sigma_i^{N_i}(V)=V$. We let $N$ be the least common multiple of all these $N_i$. Then $\sigma^N(V)=V$ for all $\sigma\in S_1$ (since $H\cap\N^r$ is generated by the $\tau_i$'s). In particular, we get that if $\gamma\in E_{\beta}$, and if $\tau \in   H\cap \N^r$, then $\gamma + N\cdot \tau\in E_{\beta}$. The conclusion of Proposition~\ref{V is periodic} follows from Proposition~\ref{important step semigroup}.   
\end{proof}
Propositions~\ref{V is not periodic} and \ref{V is periodic} finish the proof of Lemma~\ref{replace by a multiple}.
\end{proof}


\section{The p-adic uniformization}
\label{p-adic section}
We continue with the notation (and the reductions) from Section~\ref{reductions}. 

Let $\Gamma$ be the subgroup of $G(\bC)$ spanned by $\OO_S(\alpha)$
(see Lemma~\ref{simple lemma}). Theorem~\ref{T:F} yields that
$V(\bC)\cap\Gamma$ is a finite union of cosets of subgroups of
$\Gamma$. The Zariski closure of each coset of a subgroup of $\Gamma$
is a a translate of an algebraic subgroup of $G$. Hence, it suffices
to describe the intersection $\OO_S(\alpha)\cap G_1(\bC)$, where
$G_1=\beta+G_0$ is a translate of a connected algebraic subgroup $G_0$
of $G$.

Let $R$ be a finitely generated extension of $\Z$ over which $G$,
$G_0$, and each $\Phi_i$ can be defined.  By \cite[Proposition
4.4]{Jason} (see also \cite[Proposition 3.3]{p-adic}), there exists a
$p$ and an embedding of $R$ into $\Z_p$ such that
 
\begin{enumerate}
\item $G$ and $G_0$ have smooth models $\cG$ and $\cG_0$ over $\Z_p$;
\item each $\Phi_i$ extends to an unramified endomorphism of $\cG$;
\item $\alpha$ and $\beta$ extend to points in $\cG(\Z_p)$.
\end{enumerate}

Let $J_i$ denote the map $\Phi_i$ induces on the tangent space of
$0$.  By (2) and \cite[Proposition 2.2]{Jason}, if one chooses coordinates
for this tangent space via generators for the completed local ring at
0, then the entries of the matrix for $J_i$ will be in $\Z_p$.  Fix
one such set of coordinates and let $| \cdot |_p$ denote the corresponding $p$-adic
metric on the tangent space at 0. 

According to \cite[Proposition 3, p. 216]{NB} there exists a $p$-adic
analytic map $\exp$ which induces an analytic isomorphism between a
sufficiently small neighborhood $\cU$ of $\bG_a^g(\bC_p)$ and a
corresponding $p$-adic neighborhood of the origin $0\in
\cG(\bC_p)$. Furthermore (at the expense of possibly replacing $\cU$ by a smaller set), we may assume that the neighborhood $\cU$ is
a sufficiently small open ball, i.e., there exists a (sufficiently small) positive real number $\epsilon$ such that $\cU$ consists of all 
$(z_1,\dots,z_g)\in\bC_p^g$ satisfying $|z_i|_p<\epsilon$.  Because
$\exp(\cU)\cap \cG(\Z_p)$ is an open subgroup of the compact group
$\cG(\Z_p)$ (see \cite{p-adic}), we conclude that $\exp(\cU)\cap
\cG(\Z_p)$ has finite index $m$ in $\cG(\Z_p)$. So, at the expense of
replacing $\alpha$ by $m\alpha$, and $G_1$ by $mG_1=m\beta+mG_0$
(which is allowed by Lemma~\ref{replace by a multiple}) we may assume
that $\alpha,\beta\in \exp(\cU)$. Therefore, there exists $u_0,v_0\in
\cU$ such that
$
\exp(u_0)=\alpha$ and $\exp(v_0)=\beta.$
Furthermore, for each $i=1,\dots,r$, and for each $z\in\cU$ we have
$\exp(J_iz)=\Phi_i(\exp(z))$, since $\exp$ induces a local $p$-adic analytic isomorphism between $\bC_p^g$ and $\cG(\bC_p)$.
Finally, there exists a linear subvariety $\cV\subset \bG_a^g$ such that
$\exp(\cV(\cU))$ is a $p$-adic neighborhood of the origin $0\in \cG_0(\bC_p)$.

Thus we reduced Theorem~\ref{main} to proving the following result.
\begin{prop}
\label{everything in linear algebra}
In order to prove Theorem~\ref{main}, it suffices to show that the set $E$ of tuples $(n_1,\dots,n_r)\in \N^r$ for which $J_1^{n_1}\cdots J_r^{n_r}(u_0)\in (v_0+ \cV)$ belongs to the class $\CC$.
\end{prop}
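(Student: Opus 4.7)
The plan is to use the $p$-adic uniformization set up above to convert the algebraic membership condition $\Phi_1^{n_1}\cdots\Phi_r^{n_r}(\alpha)\in G_1(\bC)$ into the linear condition $J_1^{n_1}\cdots J_r^{n_r}(u_0)\in v_0+\cV$. The ingredients already in hand are: (i) $\exp\colon \cU\isomto \exp(\cU)$ is an analytic group isomorphism, where $\cU$ is viewed additively as the ball of radius $\epsilon$ in $\bC_p^g$ and $\exp(\cU)$ inherits the group law from $\cG$; (ii) $\exp(J_iz)=\Phi_i(\exp z)$ for $z\in\cU$; (iii) $\exp$ restricts to an isomorphism between $\cV\cap\cU$ and the open neighborhood $\cG_0(\bC_p)\cap\exp(\cU)$ of the origin in $\cG_0(\bC_p)$; (iv) $\exp(u_0)=\alpha$ and $\exp(v_0)=\beta$, with $\alpha,\beta\in\exp(\cU)$.

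The first step is to verify that the entire forward orbit of $u_0$ under the semigroup generated by the $J_i$ remains inside $\cU$, so that (ii) can be iterated. This holds because the entries of each $J_i$ lie in $\Z_p$ (by the choice of coordinates arising from generators of the completed local ring at $0$, together with \cite[Proposition 2.2]{Jason}), so $J_i$ is a contraction on $(\bC_p^g,|\cdot|_p)$ and preserves every ball centered at the origin. Consequently, for every $(n_1,\dots,n_r)\in\N^r$ the intertwining $\exp(J_1^{n_1}\cdots J_r^{n_r}(u_0))=\Phi_1^{n_1}\cdots\Phi_r^{n_r}(\alpha)$ holds.

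Next, I translate membership in $G_1=\beta+G_0$ into a condition on $w:=J_1^{n_1}\cdots J_r^{n_r}(u_0)-v_0$. Since $\cU$ is a ball in a non-archimedean norm it is an additive subgroup, so $w\in\cU$, and since $\exp$ restricted to $\cU$ is a homomorphism, $\exp(w)=\Phi_1^{n_1}\cdots\Phi_r^{n_r}(\alpha)-\beta$. Therefore $\Phi_1^{n_1}\cdots\Phi_r^{n_r}(\alpha)\in G_1(\bC)$ is equivalent to $\exp(w)\in G_0(\bC)\cap\exp(\cU)$, and by the bijection in (iii) this is equivalent to $w\in\cV$, i.e.\ to $J_1^{n_1}\cdots J_r^{n_r}(u_0)\in v_0+\cV$. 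Hence the set $E$ attached to the algebraic condition coincides with the set attached to the linear condition, and combining this with the reduction at the start of the section (where Theorem~\ref{T:F} lets us replace $V$ by a finite union of cosets $G_1$, and Lemma~\ref{replace by a multiple} justifies replacing $\alpha$ and $G_1$ by multiples so that $\alpha,\beta\in\exp(\cU)$), the class $\CC$ conclusion for the linear side transfers back to Theorem~\ref{main}.

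The main step that requires care is the analytic identification in (iii): it is exactly there that the subvariety $\cV$ is pinned down as the preimage of $\cG_0(\bC_p)$ under $\exp$ inside the chosen neighborhood, so that the condition ``$\exp(w)$ lies in $\cG_0(\bC_p)$'' is equivalent to a \emph{linear} condition on $w$. Once this is in place, the remainder of the argument is a routine use of the ultrametric inequality (to ensure $\cU$ is stable under $J_i$ and under subtraction) and of the intertwining relation $\exp\circ J_i=\Phi_i\circ\exp$; no further dynamical or arithmetic input is needed.
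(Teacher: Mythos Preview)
Your proposal is correct and follows essentially the same approach as the paper's own proof. The paper's argument is very terse---it simply notes that $J_i(\cU)\subset\cU$ because the entries of $J_i$ lie in $\Z_p$, and then asserts the equivalence $\Phi_1^{n_1}\cdots\Phi_r^{n_r}(\alpha)\in(\beta+\cG_0(\bC_p))\iff J_1^{n_1}\cdots J_r^{n_r}(u_0)\in v_0+\cV$ using $\Phi_1^{n_1}\cdots\Phi_r^{n_r}(\alpha)=\exp(J_1^{n_1}\cdots J_r^{n_r}(u_0))$; you have unpacked this equivalence carefully by introducing $w$, invoking the ultrametric closure of $\cU$ under subtraction, and making explicit the local bijection $\cV\cap\cU\leftrightarrow\cG_0(\bC_p)\cap\exp(\cU)$, all of which the paper leaves implicit.
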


\begin{proof}
Since the entries of each $J_i$ are in $\Z_p$, we have $J_i(\cU)\subset \cU$ for each
$i=1,\dots,r$ (also note the fact that $\cU$ is an open ball given by the norm inequalities $|z_i|_p<\epsilon$ for each coordinate $z_i$). So, for each $(n_1,\dots,n_r)\in \N^r$ we have $\Phi_1^{n_1}\cdots \Phi_r^{n_r}(\alpha)\in (\beta+\cG_0(\bC_p))$ if and only if
$$
J_1^{n_1}\dots J_r^{n_r} (u_0) \in v_0 +\cV,
$$
since $\Phi_1^{n_1}\cdots \Phi_r^{n_r}(\alpha)=\exp\left(J_1^{n_1}\cdots J_r^{n_r}(u_0)\right)$.
\end{proof}


\section{Proof of our main result}
\label{our main proofs}

We continue with the notation and reductions from the previous two Sections.

\subsection{Proof of Theorem~\ref{main}(a)}
Because the matrices $J_i$ are all diagonalizable and all commute,
they must be
simultaneously diagonalizable.  To see this, note that if $E_\lambda$
is a $\lambda$-eigenspace for a transformation $T$, and $U$ is a
transformation that commutes with $T$, then $U E_\lambda \subseteq
E_\lambda$ since for any $w$ in $E_\lambda$ we have 
$T(U w) = UTw = U(\lambda w) = \lambda U w$.  Hence the $J_i$ admit a common decomposition into eigenspaces and,
therefore, a common eigenbasis.

Let $C$ be an invertible matrix such that
$C^{-1}J_iC=B_i$ is diagonal for each $i=1,\dots,r$. Letting $u_1:=C^{-1}u_0$,
and $v_1:=C^{-1}v_0$, and $\cV_1:=C^{-1}\cV$, Proposition~\ref{everything in linear algebra} reduces to describing the set of tuples $(n_1,\dots,n_r)\in\N^r$ satisfying
\begin{equation}
\label{classical M-L}
B_1^{n_1}\dots B_r^{n_r}(u_1)\in v_1+\cV_1.
\end{equation}
Using Theorem~\ref{T:F} for the algebraic torus $\bG_m^g$ and its linear
subvariety $v_1+\cV_1$, we conclude that the set of tuples $(n_1,\dots,n_r)$
satisfying \eqref{classical M-L} lie in the intersection of $\N^r$ with a
finite union of cosets of subgroups of $\Z^r$. According to Proposition~\ref{everything in linear algebra}, this concludes the proof of
Theorem~\ref{main}(a) (see also Proposition~\ref{cosets of subgroups yield subsemigroups}).

\subsection{Proof of Theorem~\ref{main}(b)}
\label{curves}
 
In this case, in Proposition~\ref{everything in linear algebra} we know in addition that $v_0=0$ (since the subvariety $V$ is a connected algebraic group, not a translate of it), and furthermore, the linear subvariety $\cV$ is actually just a line $L$ passing through the origin (since the algebraic subgroup has dimension equal to one).

The following simple result will give rise to a proof of
Theorem~\ref{main}(b).  

\begin{prop}\label{group}
Let $V$ be a vector space over a field of characteristic $0$, let $T$ be a group of
invertible linear transformations of $V$, let $v \in V$,
let $Z$ be a one-dimensional subspace of $V$, and let 
$$ \cS = \{ g \in T \col g(v) \in Z \}.$$
If $\cS$ is nonempty, then $\cS$ is a right coset of a subgroup of $T$.   
\end{prop}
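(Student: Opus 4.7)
\smallskip

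\noindent\textbf{Proof plan.} The statement is purely formal once one recognizes that ``$g(v)\in Z$'' becomes, after translation by a fixed element of $\mathcal{S}$, the condition ``a particular nonzero vector is an eigenvector of $g$.'' The plan is therefore to fix any $g_0\in\mathcal{S}$ (which exists by hypothesis) and to show that $\mathcal{S}=Hg_0$, where $H\subseteq T$ is the stabilizer of the line $Z$ through a well-chosen point.

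\smallskip

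\noindent First I would dispose of the trivial case $v=0$: then $g(v)=0\in Z$ for every $g\in T$, so $\mathcal{S}=T$, which is a right coset of the subgroup $T$ of itself. Assume henceforth $v\neq 0$, fix some $g_0\in\mathcal{S}$, and set $w:=g_0(v)$. Since $g_0$ is invertible we have $w\neq 0$, and since $g_0\in\mathcal{S}$ we have $w\in Z$; thus $w$ spans the one-dimensional subspace $Z$, i.e.\ $Z=kw$ where $k$ is the ground field.

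\smallskip

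\noindent Next I would define
\[
H:=\{\,h\in T \col h(w)\in Z\,\}=\{\,h\in T \col w \text{ is an eigenvector of } h\,\},
\]
and verify that $H$ is a subgroup of $T$. This is where the group structure of $T$ (as opposed to merely a semigroup) is used, but the verification is immediate: the identity lies in $H$; if $h_1(w)=\lambda_1 w$ and $h_2(w)=\lambda_2 w$, then $(h_1h_2)(w)=\lambda_1\lambda_2 w\in Z$; and since each $h\in T$ is invertible, $\lambda_h\neq 0$, so $h^{-1}(w)=\lambda_h^{-1}w\in Z$. (Characteristic $0$ is not really needed here; invertibility of $h\in T$ suffices.)

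\smallskip

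\noindent Finally I would verify $\mathcal{S}=Hg_0$. For any $g\in T$, write $g=(gg_0^{-1})g_0$, so that
\[
g(v)=(gg_0^{-1})\bigl(g_0(v)\bigr)=(gg_0^{-1})(w).
\]
Hence $g\in\mathcal{S}$ if and only if $(gg_0^{-1})(w)\in Z$, i.e.\ $gg_0^{-1}\in H$, i.e.\ $g\in Hg_0$. This gives the desired equality and completes the proof. There is no real obstacle: the one point requiring attention is choosing the ``reference'' element $g_0\in\mathcal{S}$ so that the condition $g(v)\in Z$ becomes a condition on $gg_0^{-1}$ preserving the line $Z$, after which closure under multiplication and inverses is automatic.
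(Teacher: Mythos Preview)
Your proof is correct and is essentially the same as the paper's: both fix an element $g_0\in\mathcal{S}$ (the paper calls it $a$), define the stabilizer of the line $Z$ (your $H$ equals the paper's $T_Z$, since for invertible $h$ and nonzero $w$ spanning $Z$ the conditions $h(w)\in Z$ and $h(Z)=Z$ coincide), and show $\mathcal{S}=Hg_0$ via $g(v)=(gg_0^{-1})(w)$. Your observation that characteristic $0$ is not actually used is also correct.
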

\begin{proof}
If $v$ is the zero vector, then the result is immediate. So, from now on, assume that $v$ is nonzero.

  Let $T_Z = \{ g \in T \col g(Z) = Z \}$ be the subgroup of $G$ which stabilizes the linear subspace $Z$.  If $\cS$ is not empty,
  we may choose some $a \in \cS$.  Then for any $b \in \cS$, we have
  $ba^{-1}(a(v)) \in Z$.  Since $a(v) \not= 0$ and $ba^{-1}$ is linear
  this means that $ba^{-1} \in T_Z$.  Thus, $b$ is in the coset $T_Z
  a$.  Similarly, $ga(v) \in Z$ for any $ga \in T_Z a$.  Thus, $\cS =
  T_Z a$.  
\end{proof}

Now we are ready to finish the proof of Theorem~\ref{main}(b).

    Let $T$ be the
  group of matrices generated by the $J_i$ (note that each $J_i$ is invertible since each $\Phi_i$ is a finite map by the reduction proved in Lemma~\ref{reduction to finite maps}).  Then, by
  Proposition~\ref{group}, the set of $g \in T$ such that $g(u_0) \in
  L$ is a coset $T_L a$ of a subgroup $T_L$ of $T$ which stabilizes the one dimensional variety $L$.  Now, for $i=1,\dots, r$,
  let $e_i$ be the element of $\Z^r$ whose $j$-th coordinate is the
  Kronecker delta $\delta_{ij}$.  Let $\rho$ be the map from $\Z^r$ to
  $T$ that takes $e_i$ to $J_i$. Then $\rho^{-1}(T_L a)$ is a coset $\gamma+H$ of a subgroup $H\subset \Z^r$. This shows that the set $E$ of tuples $(n_1,\dots,n_r)\in \N^r$ for which $J_1^{n_1}\cdots J_r^{n_r}(u_0)\in L$ is in fact $(\gamma+H)\cap \N^r$. Using Proposition~\ref{cosets of subgroups yield subsemigroups} we obtain that $E\in \CC$, as desired in the conclusion of Proposition~\ref{everything in linear algebra}. This concludes the proof of Theorem~\ref{main}(b).

\begin{remark}
\label{another remark}
  Although Proposition~\ref{group} holds for {\it any} group $T$ (not
  just finitely generated commutative groups $T$),
  Lemma~\ref{reduction to finite maps} only holds for finitely
  generated commutative semigroups $S$, so we are only able to reduce to
  Proposition~\ref{group} under the assumption that our semigroup is
  commutative and finitely generated.  On the other hand, if the
  semigroup is finitely generated and all of its elements are finite,
  then the proof of Theorem~\ref{main}(b) should go through, even when
  the semigroup is not commutative.  The case of semigroups that are
  not finitely generated is more troublesome; for example, it may not
  be possible to find a $p$ such that all of the generators are
  defined over $\Z_p$. 
\end{remark}

\subsection{Proof of Theorem 1.3(c)}
\label{G_m^2}
We already proved Theorem~\ref{main}(c) when $G=A^j$ for $j<2$ in Corollary~\ref{j=0,1} (note that a one-dimensional semiabelian variety is clearly simple). Therefore, from now on, we may assume $G=A^2$.

Since it is a one-dimensional semiabelian variety, $A$ is either isomorphic to $\bG_m$ or to an elliptic curve $E$. In either case, $\End(E)$ is isomorphic with a subring $\cR$ of the ring $\ok$ of algebraic integers in a given quadratic imaginary field $K$. 
We already proved the result if (for each $i$) the Jacobian $J_i$ of $\Phi_i$
at the identity of $A^2$ is diagonalizable. Therefore, we may assume there
exists at least one such Jacobian which is not diagonalizable.
Since the endomorphism ring of $A^2$ is isomorphic with the ring $M_{2,2}(\cR)$ of $2$-by-$2$ matrices with integer coefficients, we have that each $J_i\in M_{2,2}(\cR)$. 

The following result is an elementary application of linear algebra.
\begin{prop}
\label{linear algebra}
Let $J_1,\dots, J_r\in M_{2,2}(\cR)$ be commuting $2$-by-$2$ matrices. Assume there exists
$i=1,\dots, r$ such that $J_i$ is not diagonalizable. Then there exists an
invertible $2$-by-$2$ matrix $B$ defined over $K$ such that for each
$i=1,\dots,r$ there exist $a_i \in \ok$ and $c_i \in K$ such that
\[BJ_iB^{-1} = a_i \cdot \left(\begin{array}{cc}
1 & c_i\\
0 & 1
\end{array}\right).\]
\end{prop}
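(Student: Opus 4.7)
The plan is to exploit the non-diagonalizable $J_{i_0}$ to produce a $K$-rational common eigenline for every $J_i$, and then to read off the required normal form from the centralizer of $J_{i_0}$. Since $J_{i_0}\in M_{2,2}(\mathcal{R})$ is not diagonalizable, its characteristic polynomial is forced to factor in $K[x]$ as $(x-\lambda)^2$. The eigenvalue $\lambda$ satisfies the monic relation $\lambda^2-\mathrm{tr}(J_{i_0})\lambda+\det(J_{i_0})=0$ with coefficients in $\mathcal{R}\subset\ok$, so $\lambda\in\ok$ because $\ok$ is integrally closed. Set $L:=\ker(J_{i_0}-\lambda I)\subset K^2$; this is a one-dimensional $K$-subspace, the one-dimensionality being precisely the failure of diagonalizability of $J_{i_0}$.

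The next step is to show that every $J_i$ preserves $L$. Commutativity gives $(J_{i_0}-\lambda I)(J_iv) = J_i(J_{i_0}-\lambda I)v = 0$ for $v\in L$, so $J_iv\in L$. I would then pick $B\in\mathrm{GL}_2(K)$ whose inverse has first column spanning $L$; every $BJ_iB^{-1}$ is then upper triangular and
\[ BJ_{i_0}B^{-1}=\lambda I + tN\quad\text{with}\quad N=\left(\begin{smallmatrix}0&1\\0&0\end{smallmatrix}\right) \text{ and } t\neq 0, \]
the nonvanishing of $t$ being equivalent to $J_{i_0}\neq \lambda I$, i.e.\ to its non-diagonalizability.

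Writing $BJ_iB^{-1} = \left(\begin{smallmatrix}\mu_i & \nu_i\\ 0 & \rho_i\end{smallmatrix}\right)$ and equating the $(1,2)$-entries of $BJ_iB^{-1}\cdot BJ_{i_0}B^{-1}$ and $BJ_{i_0}B^{-1}\cdot BJ_iB^{-1}$ will yield $t(\mu_i-\rho_i)=0$, so $\mu_i=\rho_i=:a_i$. This $a_i$ is the repeated eigenvalue of $J_i$ and therefore integral over $\mathcal{R}$, which places it in $\ok$. One then sets $c_i:=\nu_i/a_i\in K$ to obtain $BJ_iB^{-1}=a_i\left(\begin{smallmatrix}1&c_i\\0&1\end{smallmatrix}\right)$, provided $a_i\neq 0$.

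The one genuinely non-formal step is verifying $a_i\neq 0$, and this is where I expect the main (very mild) obstacle. Here I would invoke the reduction of Lemma~\ref{reduction to finite maps}, by which each $\Phi_i$ may be assumed to be a finite endomorphism of $G$. In characteristic zero a finite endomorphism of a smooth connected algebraic group is étale at the identity, so $J_i$ is invertible and $a_i^2 = \det(BJ_iB^{-1})\neq 0$. Apart from this invertibility input, the whole argument is essentially the classical fact that the centralizer of a non-scalar $2\times 2$ Jordan block is the two-dimensional algebra spanned by $\{I,N\}$.
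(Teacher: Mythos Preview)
Your argument is correct and follows essentially the same route as the paper: both proofs hinge on the fact that the centralizer of a non-scalar $2\times 2$ Jordan block consists of upper-triangular matrices with equal diagonal entries, and both invoke the reduction of Lemma~\ref{reduction to finite maps} to guarantee $a_i\neq 0$ (the paper records this immediately after the proof rather than inside it). The only organizational difference is that the paper first argues separately that every $J_i$ must have a repeated eigenvalue (by observing that a matrix with distinct eigenvalues commutes only with diagonal matrices in its eigenbasis), whereas you deduce $\mu_i=\rho_i$ directly from the commutation relation with $BJ_{i_0}B^{-1}$; your route is slightly more streamlined and makes the centralizer computation explicit rather than citing it as a fact.
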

\begin{proof}
Note in general that if $\lambda_1 \not= \lambda_2$, then
$\left(\begin{array}{cc}
 \lambda_1 & 0\\
 0 &  \lambda_2
 \end{array}\right)$ 
commutes with
$ \left(\begin{array}{cc}
a & b\\
c & d 
\end{array}\right)$
if and only if $c = b = 0$.  Thus, if there is some $J_\ell$ that has
distinct eigenvalues, then all of the $J_i$ must be diagonalizable,
which we have assumed is not the case. Thus, each $J_i$ must have only
one eigenvalue, call it $\lambda_i$.  Note that since each $J_i$ has
entries in $\ok$ and a repeated eigenvalue $\lambda_i$, then for each
$i$, we have that $\lambda_i$ is the only root of a second degree
monic polynomial with coefficients in $\ok$. Hence each $\lambda_i$ is
an algebraic integer, and moreover, it belongs to the field $K$; thus
$\lambda_i\in\ok$ for each $i$.  Now, if two invertible
non-diagonalizable $2$-by-$2$ matrices $M$ and $N$ commute, then any
eigenvector for $M$ must also be an eigenvector for $N$.  Thus, there
exists a change of basis matrix $B$ with entries in $K$ such that
$BJ_iB^{-1} = a_i \cdot \left(\begin{array}{cc}
1 & c_i\\
0 & 1
\end{array}\right)$
for each $i$ (where $a_i=\lambda_i\in \ok$ and $c_i\in K$).  
\end{proof}
Note that each $a_i$ is nonzero since we assumed that each $\Phi_i$ is an isogeny (see Lemma~\ref{reduction to finite maps}).

Since we work in a two-dimensional space, the linear variety $\cV$ from Proposition~\ref{everything in linear algebra} is a line $L$ through the origin (note that in Proposition~\ref{zero-dimensional} we already covered the case $\dim(V)=0$, while the case $V=A^2$ is trivial). So, if we let
$\left(\begin{array}{c}
\delta_1\\
\delta_2
\end{array}\right)$
for some constants $\delta_1,\delta_2\in\bC_p$, then
Proposition~\ref{everything in linear algebra} reduces the problem to determining for which tuples $(n_1,\dots,n_r)\in \N^r$ we have that
\[\prod_{i=1}^r a_i^{n_i} \cdot \left(\begin{array}{cc}
1 & \sum_{i=1}^r n_i c_i\\
0 & 1
\end{array}\right)\cdot \left(\begin{array}{c}
\delta_1\\
\delta_2
\end{array}\right) \in v_0 + L.
\]
So, there exist constants $\delta_3,\delta_4,\delta_5\in\bC_p$ such that the above equation reduces to
\begin{equation}
\label{ugly equation}
\prod_{i=1}^r a_i^{n_i}\cdot \left(\delta_3\cdot \left(\delta_1 + \delta_2 \cdot \left(\sum_{i=1}^r n_i c_i\right)\right)+\delta_4\cdot \delta_2\right)=\delta_5.
\end{equation}
After collecting the terms, and introducing new constants $A,B\in \bC_p$, we conclude that the equation \eqref{ugly equation} reduces to either
\begin{equation}
\label{not weird equation}
\sum_{i=1}^r c_i n_i = B,
\end{equation}
or
\begin{equation}
\label{not so weird equation}
\prod_{i=1}^r a_i^{n_i} = A,
\end{equation}
or
\begin{equation}
\label{weird equation}
\prod_{i=1}^r a_i^{n_i} \cdot \left(A+\sum_{i=1}^r n_i c_i \right)=B.
\end{equation} 
It is immediate to see that the set $E$ of solution tuples $(n_1,\dots,n_r)\in \N^r$ to equation \eqref{not weird equation} belongs to the class $\CC$. Indeed, $E$ is a finite union of cosets of $H\cap \N^r$, where $H\subset \Z^r$ is the subgroup consisting of all solutions $(n_1,\dots,n_r)\in\Z^r$ to the equation $\sum_{i=1}^r c_i n_i = 0$; the cosets are translates of $H$ by the minimal elements of $E$ with respect to the usual order $\prec$ on $\N^r$ (see Proposition~\ref{finitely many minimal elements}).

Either arguing directly (note that each $|a_i|^2\in\N$ since $a_i$ is a nonzero algebraic integer from a quadratic imaginary field), or alternatively using the proof of the Mordell-Lang Conjecture for the multiplicative group \cite{Lang_int} we conclude that the set of tuples $(n_1,\dots,n_r)$ which are solutions to equation \eqref{not so weird equation} also belongs to the class $\CC$.

Now, assume equation \eqref{ugly equation} reduces to \eqref{weird equation}. Also note that we may assume that $B\ne 0$ since otherwise we would have that $\sum_{i=1}^r c_i n_i = -A$ (note that each $a_i$ is nonzero), which reduces everything to the equation \eqref{not weird equation} studied above.

We will show first that it is impossible that the set of values for $\prod_{i=1}^r a_i^{n_i}$ is infinite as $(n_1,\dots,n_r)$ runs through the tuples of all solutions to equation \eqref{weird equation}. Indeed, if the set of values for $\prod_{i=1}^r a_i^{n_i}$ would be infinite, then we obtain that $A\in K$, and thus, that $B\in K$ (since $a_i\in \ok$ and $c_i\in K$ for each $i$). Furthermore, at the expense of replacing both $A$ and $B$, but also the $c_i$'s by integral multiples of them, we may even assume that $A,B\in \ok$ and also $c_i\in\ok$ for each $i$. 
Since $B\ne 0$, then $A+\sum_{i=1}^r c_in_i\ne 0$. On the other hand, $A+\sum_{i=1}^r c_in_i\in \ok$, and $\ok$ is the ring of integers in a quadratic imaginary field. Therefore, the absolute value
$\mid A+\sum_{i=1}^r c_i n_i\mid$ is uniformly bounded away from $0$; more precisely, $\mid A+\sum_{i=1}^r c_i n_i \mid\ge 1$. 
So, for each solution tuple $(n_1,\dots,n_r)$ to \eqref{weird equation}, we have that $\prod_{i=1}^r |a_i|^{n_i} \le |B|$.
On the other hand, because $\ok$ is the ring of integers in a quadratic imaginary field, there are finitely many elements $D\in\ok$ such that $|D|\le |B|$. This contradicts our assumption that the set of values for $\prod_{i=1}^r a_i^{n_i}$ is infinite.

Now, since there are only finitely many distinct values for
$\prod_{i=1}^r a_i^{n_i}$ as $(n_1,\dots,n_r)$ runs over all tuples
which are solutions to equation \eqref{weird equation}, then
\eqref{weird equation} reduces to solving simultaneously finitely many
pairs of equations of the form $\prod_{i=1}^r a_i^{n_i} = D$ and
$\sum_{i=1}^r c_i n_i = - A + B/D$ for some numbers $D$ belonging to a
finite set. However, the solutions set of tuples
$(n_1,\dots,n_r)\in\N^r$ to either one of the above equations
belongs to the class $\CC$ (as we argued before for equations
\eqref{not weird equation} and \eqref{not so weird equation}). Since
the class $\CC$ is closed under intersections (see
Proposition~\ref{intersection of cosets yields cosets}), we are done.


\section{Counterexamples}
\label{counterexamples}

In this section we give two examples in which
Question~\ref{semigroup semiabelian}
has a negative answer.  Both examples involve two algebraic group endomorphisms
of $\bG_m^3$.  In the first example, the subvariety $V$ is a line which is a
coset of a one-dimensional algebraic subgroup of $\bG_m^3$.  In this example
the endomorphisms are in fact automorphisms.  In the second example, the
subvariety $V$ is a two-dimensional algebraic subgroup of $\bG_m^3$, and
the algebraic group endomorphisms do not induce automorphisms on any
positive-dimensional subvariety of $\bG_m^3$ (see Lemma~\ref{not automorphisms}).

\subsection{An example where $V$ is a line.}\label{ex1}
Let $\Phi$ and $\Psi$ be the endomorphisms of $\bG_m^3$ given by
$$
\Phi(x,y,z) = \left(xy^{-1},\, yz^{-2},\, z\right) 
\quad \text{and} \quad
\Psi(x,y,z) = \left(xy^2,\, yz^4,\, z\right).
$$
Then $\Phi\Psi(x,y,z)=\Psi\Phi(x,y,z)=\left(xyz^{-4},\, yz^2,\, z\right)$.
Moreover, for any $m,n\in\N$ we have
\[
\Phi^m\Psi^n(x,y,z)=\left(xy^{2n-m}z^{4n(n-1)-4mn+m(m-1)},\,yz^{4n-2m},\,
z\right).
\]

Let $V$ be the subvariety $\{(2,y,3)\text{ : }y\in\bC\}$ of $\bG_m^3$,
and let $\alpha:=(2,9,3)$.  Then, for $m,n\in\N$, the point
$\Phi^m\Psi^n(\alpha)$ lies in $V(\bC)$ if and only if
\[
2(2n-m) + 4n(n-1) - 4mn + m(m-1) = 0,
\]
or, in other words,
$(2n-m)^2 = 3m$. The solutions of this last equation in nonnegative integers are visibly
$m = 3k^2$ and $n = 3(k^2\pm k)/2$ with $k\in\N$.  But the set $
\{(3k^2,3(k^2\pm k)/2) : k \in \N\}$
is not in the class $\CC$ (for instance, because there are arbitrarily large gaps
between consecutive values of the first coordinate).

In this example, $V$ is a line, and in fact is the translate of the
one-dimensional algebraic subgroup
$\{(1,y,1)\text{ : }y\in\bC\}$ by the point $(2,1,3)$.
This example resembles previous counterexamples to our general  Question~\ref{general semigroup question}, in that
$\Phi$ and $\Psi$ are automorphisms.  Below we present another example which
does not have this property.

\subsection{An example involving non-automorphisms.}\label{ex2}
Let $\Phi$ and $\Psi$ be the endomorphisms of $\bG_m^3$ given by

$$\Phi(x,y,z) = \left(x^2y^{-1},\, y^2z^{-2},\, z^2\right) \\
\quad \text{and} \quad
\Psi(x,y,z) = \left(x^2y^2,\, y^2z^4,\, z^2\right).$$
Then $\Phi\Psi(x,y,z)=\Psi\Phi(x,y,z)=
\left(x^4y^2z^{-4},\, y^4z^4,\, z^4\right)$.
Moreover, for any $m,n\in\N$ we have that $\Phi^m\Psi^n(x,y,z)$
equals
\begin{align*}
\bigl(x^{2^{m+n}} &y^{n 2^{m+n} - m 2^{m+n-1}}
   z^{n(n-1) 2^{m+n} + m(m-1) 2^{m+n-2} - mn 2^{m+n}},\\
&y^{2^{m+n}} z^{n 2^{m+n+1} - m 2^{m+n}},\, z^{2^{m+n}}\bigr).
\end{align*}

Let $V$ be the subvariety of $\bG_m^3$ defined by $x=1$, and let
$\alpha:=(1,1/3,9)$.  Then, for $m,n\in\N$, the point $\Phi^m\Psi^n(\alpha)$
lies in $V(\bC)$ if and only if
\[
\Bigl(\frac13\Bigr)^{n 2^{m+n} - m 2^{m+n-1}}
  9^{n(n-1) 2^{m+n} + m(m-1) 2^{m+n-2} - mn 2^{m+n}} = 1,
\]
or in other words
\[
2^{m+n-1}
(- 2n + m + 4n(n-1) + m(m-1) - 4mn) = 0.
\]
This last equation is equivalent to $(2n - m)^2 = 6n$, whose solutions in
nonnegative integers are visibly $n=6k^2$ and $m = 12k^2 \pm 6k$ with $k\in\N$.
But the set of solutions $\{(12k^2\pm 6k,\,6k^2): k\in\N\}$ is not in $\CC$ (for instance, because there are arbitrarily large gaps between consecutive
values of the second coordinate).

This example differs from all previous counterexamples to the generalization
of Question~\ref{semigroup semiabelian} in that the semigroup $S$ generated
by $\Phi$ and $\Psi$ contains no nonidentity element having a degree-one
restriction to some positive-dimensional subvariety of $\bG_m^3$.
The reason for this is that $\Phi^m\Psi^n$ has characteristic polynomial
$(X-2^{m+n})^3$ (see Lemma~\ref{not automorphisms} for a more general result).

\begin{lemma}
\label{not automorphisms}
Let $g\in\N$, and let $\Phi\in\End(\bG_m^g)$ be an algebraic group endomorphism whose Jacobian at the identity of $\bG_m^g$ has all eigenvalues of absolute value larger than one. Then there is no positive-dimensional subvariety $V$ of $\bG_m^g$ such that $\Phi$ restricts to an automorphism of $V$.
\end{lemma}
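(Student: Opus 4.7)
Suppose for contradiction that $V\subset\bG_m^g$ is a positive-dimensional subvariety on which $\Phi$ restricts to an automorphism, so $\Phi(V)=V$ and $\Phi|_V$ is bijective. Replacing $\Phi$ by a suitable iterate $\Phi^k$ (which still satisfies the eigenvalue hypothesis, since the eigenvalues of $A^k$ are $\lambda_i^k$, each of absolute value greater than one) and passing to an irreducible component of $V$, we may assume $V$ is irreducible; by descent we may also assume $V$ is defined over $\overline{\Q}$.

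The key step is to show that $V$ must be a torsion translate of an algebraic subtorus of $\bG_m^g$. First, the $\Phi$-periodic points of $\bG_m^g(\bC)$ are precisely the torsion points: any $Q$ with $\Phi^N(Q)=Q$ lifts under $\exp\colon\bC^g\to\bG_m^g(\bC)$ to a solution of $(A^N - I)z\in 2\pi i\Z^g$, and $A^N - I$ is invertible because its eigenvalues $\lambda_i^N-1$ are nonzero. Second, taking a generic $P\in V(\overline{\Q})$ and forming its backward orbit $P_n := (\Phi|_V)^{-n}(P)\in V(\overline{\Q})$, one has $\Phi^n(P_n) = P$. A standard height estimate on $\bG_m^g$ using that all eigenvalues of $A$ have absolute value strictly greater than one
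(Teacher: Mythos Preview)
Your argument is cut off mid-sentence and never reaches a conclusion, so as written it is incomplete. Beyond the truncation, the sketch hides several nontrivial points. The ``descent'' to $\overline{\Q}$ is not automatic: $\Phi$ is defined over $\Q$, but $V$ need not be, and you must spread $V$ out over a finitely generated $\Q$-algebra and verify that the conditions $\Phi(V)=V$, $\dim V>0$, and bijectivity of $\Phi|_V$ persist at some closed point. The ``standard height estimate'' is also not standard in the form you need: the naive Weil height on $\bG_m^g$ does \emph{not} satisfy $h(\Phi(P))\ge c\,h(P)$ with $c>1$ when $A$ is non-diagonalizable---for $A=\left(\begin{smallmatrix}2&3\\0&2\end{smallmatrix}\right)$ and $P=(2^{3j},2^{-2j})$ one computes $h(\Phi(P))=\tfrac{4}{5}h(P)$. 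One must either pass to an iterate $\Phi^N$ with $\|A^{-N}\|<1$, or use a height built from a norm on $\bR^g$ adapted to $A$. To apply Northcott to the backward orbit you further need the crucial (and unmentioned) observation that $(\Phi|_V)^{-1}$ is a morphism defined over a fixed number field, so all $P_n$ lie in $V(K(P))$ and hence have bounded degree. Finally, having shown that every $\overline{\Q}$-point of $V$ is torsion, you still need Laurent's theorem (Manin--Mumford for tori) to conclude that $V$ is a torsion coset of a subtorus $T$, and then check $\Phi(T)=T$ to extract the contradiction with the eigenvalue hypothesis.

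The paper's proof takes a different and much shorter route, working directly over $\bC$: it considers the \emph{forward} $\Phi$-orbit of a nontorsion point of $V$, observes that this orbit lies in a finitely generated subgroup $\Gamma$ of $\bG_m^g(\bC)$, and applies the Mordell--Lang theorem to produce a coset of a positive-dimensional subtorus $H$ inside $V$. Injectivity of $\Phi|_V$ then forces $\Phi|_H$ to have degree one, contradicting the eigenvalue hypothesis. This avoids heights and the reduction to $\overline{\Q}$ altogether, trading the lighter Laurent theorem your approach would use for the full Faltings--Vojta theorem.
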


\begin{proof}
Assume there exists a positive-dimensional subvariety $V\subset \bG_m^g$ such that $\Phi$ restricts to an automorphism of $V$. Let $\alpha\in V(\bC)$ be a nontorsion point of $\bG_m^g$ (note that $V$ is positive-dimensional, and thus not all its points are torsion). Because $\Phi$ restricts to an endomorphism of $V$, the $\Phi$-orbit $\OO_{\langle\Phi\rangle}(\alpha)$ of $\alpha$ is contained in $V$. Applying Lemma~\ref{simple lemma}, there exists a finitely generated subgroup $\Gamma\subset \bG_m^g(\bC)$ such that $\OO_{\langle\Phi\rangle}(\alpha)$ is contained in $\Gamma$. Because $\OO_{\langle\Phi\rangle}(\alpha)$ is an infinite set contained in the intersection $V(\bC)\cap\Gamma$, we obtain that the Zariski closure $W$ of $V(\bC)\cap\Gamma$ is a positive-dimensional subvariety of $V$. Using Theorem~\ref{T:F}, we conclude that $W$ contains a translate of a positive-dimensional algebraic subgroup $H$ of $\bG_m^g$. Because $\Phi$ restricts to an automorphism of $V$, then it induces a degree-one map on $H$. Therefore, the Jacobian $J_{\Phi,H}$ of $\Phi$ at the identity of $H$ (which is also the identity of $\bG_m^g$) is an invertible matrix over $\Z$; however, this is contradicted by the fact that all eigenvalues of $J_{\Phi,H}$ have absolute value larger than one (because they are among the eigenvalues for the Jacobian of $\Phi$ at the identity of $\bG_m^g$).
\end{proof}

\begin{remark}
  Using the inverse of the $p$-adic exponential map (as in the proof of Theorem~\ref{main}(a)) reduces
  the examples of \ref{ex1} and \ref{ex2} to simple linear algebra.
  Matrices that commute must share a common Jordan block
  decomposition, which is slightly weaker than being simultaneously
  diagonalizable.  So, for example, applying the $p$-adic logarithmic map to the endomorphisms in \ref{ex1}
  converts $\Phi$ into $\left( \begin{array}{rrr} 1 & -1 & 0 \\ 0 & 1
      & -2 \\ 0 & 0 & 1 \end{array} \right)$ and converts $\Psi$ into
  $\left( \begin{array}{rrr} 1 & 2 & 0 \\ 0 & 1 & 4 \\ 0 & 0 &
      1\end{array} \right)$.  Note also that the above counterexamples can be also constructed for endomorphisms of $E^3$, where $E$ is an elliptic curve, since the whole argument is at the level of linear algebra after applying the $p$-adic logarithmic maps.
\end{remark}
 


\end{document}